\newcommand{\proofsubparagraph}{\@startsection{subparagraph}{5}{\z@}%
                                       {3.25ex \@plus1ex \@minus .2ex}%
                                       {-1em}%
           {\color{lipicsGray}\sffamily\normalsize\bfseries}}
\newtheorem{question}[theorem]{Question}
\newtheorem{observation}[theorem]{Observation}
\crefname{observation}{Observation}{Observations}
\Crefname{observation}{Observation}{Observations}
\crefname{question}{Question}{Questions}
\Crefname{question}{Question}{Questions}
\newcommand{\N}{\mathbb{N}}
\newcommand{\calC}{\mathcal{C}}
\newcommand{\mul}{\,}
\DeclarePairedDelimiterX\set[1]\lbrace\rbrace{#1}
\newcommand{\CIIIvO}{\raisebox{-1.5pt}{\includegraphics[page=2]{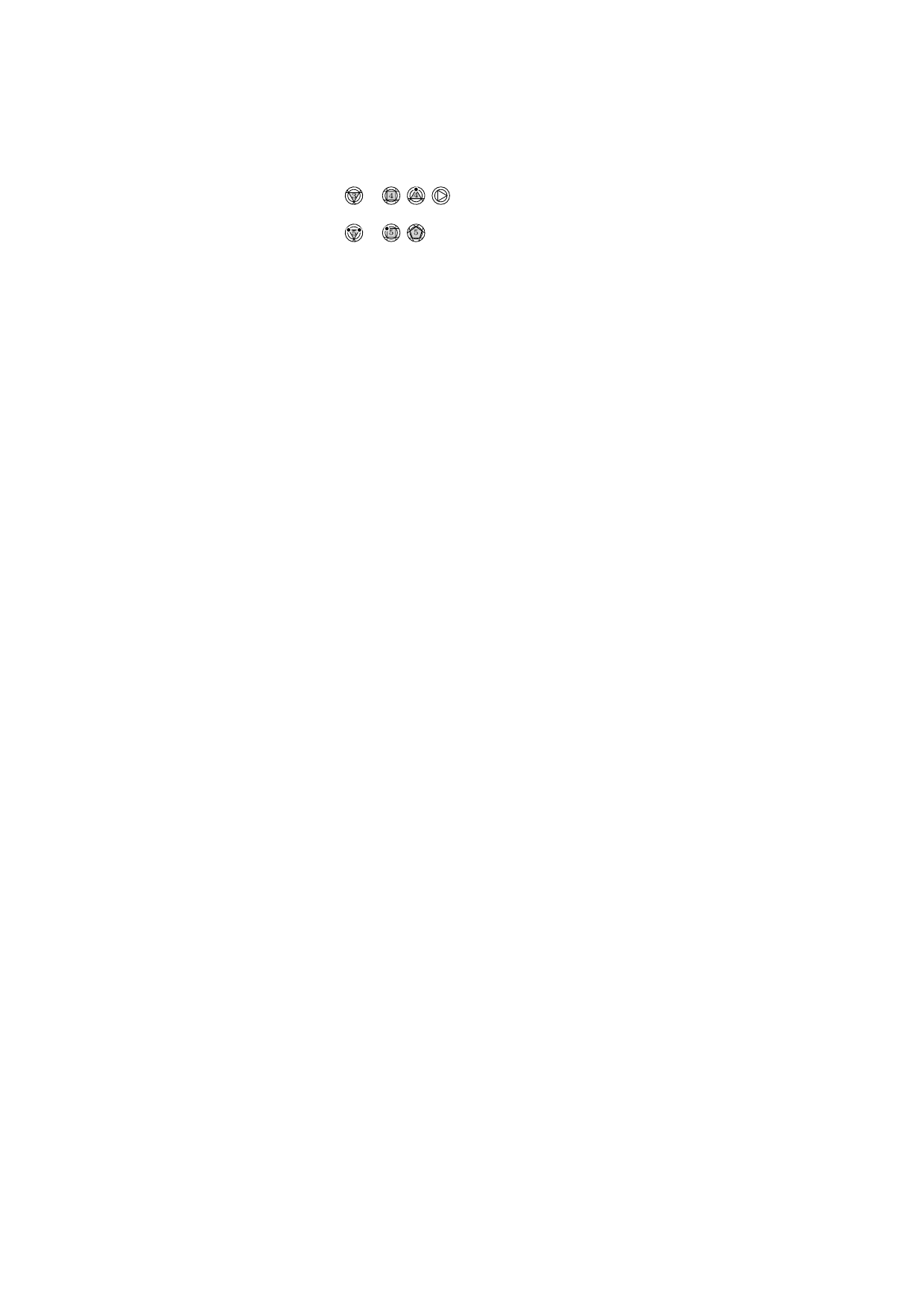}}}
\newcommand{\CIVvO}{\raisebox{-1.5pt}{\includegraphics[page=3]{figures/small-icons.pdf}}}
\newcommand{\CIVvI}{\raisebox{-2pt}{\includegraphics[scale=0.9,page=4]{figures/small-icons.pdf}}}
\newcommand{\CVvI}{\raisebox{-2pt}{\includegraphics[page=6]{figures/small-icons.pdf}}}
\newcommand{\CVvO}{\raisebox{-3pt}{\includegraphics[page=7]{figures/small-icons.pdf}}}
\newcommand{\Clarge}{\raisebox{-1.5pt}{\includegraphics[page=8]{figures/small-icons.pdf}}}
\newcommand{\CVIvII}{\raisebox{-1.5pt}{\includegraphics[page=16]{figures/small-icons.pdf}}}
\newcommand{\CVvII}{\raisebox{-1.5pt}{\includegraphics[page=17]{figures/small-icons.pdf}}}
\newcommand{\YTriQuad}{\raisebox{-3pt}{\includegraphics[page=9]{figures/small-icons.pdf}}}
\newcommand{\YQuintQuadTri}{\raisebox{-3pt}{\includegraphics[page=10]{figures/small-icons.pdf}}}
\newcommand{\YTriQuadTri}{\raisebox{-3pt}{\includegraphics[page=12]{figures/small-icons.pdf}}}
\newcommand{\YTriTri}{\raisebox{-3pt}{\includegraphics[page=13]{figures/small-icons.pdf}}}
\newcommand{\YTriTriTri}{\raisebox{-3pt}{\includegraphics[page=14]{figures/small-icons.pdf}}}
\newcommand{\YStar}{\raisebox{-4.5pt}{\includegraphics[page=15]{figures/small-icons.pdf}}}
\newcommand{\YTriQuadQuad}{\raisebox{-3pt}{\includegraphics[page=18]{figures/small-icons.pdf}}}
\newcommand{\cor}[2]{%
\left(#1\leftrightarrow#2\right)
}
\newcommand{\coeff}[1]{%
#1}
\newcommand{\Toth}{T{\'o}th}%
\author[1]{Miriam Goetze\footnote{funded by the Deutsche Forschungsgemeinschaft (DFG, German Research Foundation) -- 520723789}}
\author[2]{Michael Hoffmann}
\author[3]{Ignaz Rutter\footnote{funded by the Deutsche Forschungsgemeinschaft (DFG, German Research Foundation) -- 541433306}}
\author[1]{Torsten~Ueckerdt}
\affil[1]{Karlsruhe Institute of Technology, Germany\\
\texttt{miriam.goetze@kit.edu}, \texttt{torsten.ueckerdt@kit.edu}}
\affil[2]{Department of Computer Science, ETH Z\"urich, Switzerland\\
\texttt{hoffmann@inf.ethz.ch}}
\affil[3]{University of Passau, Germany\\
\texttt{rutter@fim.uni-passau.de}}
\authorrunning{M. Goetze, M. Hoffmann, I. Rutter, T. Ueckerdt}
\title{Crossing Number of 3-Plane Drawings\footnote{This research was initiated at the Workshop on \emph{Graph and Network Visualization} (GNV~2024) in Heiligkreuztal, Germany, June 23--28, 2024.}}
\begin{document}

\maketitle

\begin{abstract}
    We study $3$-plane drawings, that is, drawings of graphs in which every edge has at most three crossings.
    We show how the recently developed Density Formula for topological drawings of graphs~\cite{kaufmann2023density} can be used to count the crossings in terms of the number~$n$ of vertices.
    As a main result, we show that every~$3$-plane drawing has at most~$5.5(n-2)$ crossings, which is tight.
    In particular, it follows that every~$3$-planar graph on~$n$ vertices has crossing number at most~$5.5n$, which improves upon a recent bound~\cite{bekos2024k-planar} of~$6.6n$. 
    To apply the Density Formula, we carefully analyze the interplay between certain configurations of cells in a~$3$-plane drawing.
    As a by-product, we also obtain an alternative proof for the known statement that every~$3$-planar graph has at most~$5.5(n-2)$ edges.
\end{abstract}

\section{Introduction}

One of the most basic combinatorial questions one can ask for a class of graphs is: How many edges can a graph from this class have as a function of the number~$n$ of vertices? Prominent examples include upper bounds of~$\binom{n}{2}$ for the class of all graphs and~$\frac{n^2}{4}$ for bipartite graphs. These bounds are immediate consequences of the definition of these graph classes, and they are tight, that is, there exist graphs in the class with exactly this many edges. But for several other graph classes good upper bounds on the number of edges are much more challenging to obtain. Notably this holds for classes that relate to the existence of certain geometric representations. One the most fundamental questions one can ask about a class of geometrically represented graphs is: What is the minimum number of edge crossings required in such a representation, as a function of the number~$n$ of vertices? 
We study both of these fundamental questions in combination, for the class of $3$-planar graphs. A graph is \emph{$k$-planar} if it can be drawn in the plane such that every edge has at most~$k$ crossings. 
The study of~$k$-planar graphs goes back to Ringel~\cite{Ringel65} and has been a major focus in graph drawing over the past two decades~\cite{dlm-sgdbp-20}, as a natural generalization of planar graphs~($k=0$). 

The maximum number of edges in a simple~$k$-planar graph on~$n$ vertices is known to be at most~$c_k(n-2)$, where~$c_0=3$, $c_1=4$~\cite{bodendiek1983bemerkungen}, $c_2=5$~\cite{pt-gdfcpe-96,pt-gdfcpe-97}, $c_3=5.5$~\cite{prtt-iclfmcsg-04,prtt-iclfmcsg-06}, $c_4=6$~\cite{ackerman2019topological}, and~$c_k\le 3.81\sqrt{k}$, for general~$k\ge 5$~\cite{ackerman2019topological}. The bounds for~$k\le 2$ are tight and those for~$k\le 4$ are tight up to an additive constant~\cite{ackerman2019topological,bekos2017optimal}. The bounds for~$k\le 4$ also generalize to \emph{non-homotopic} drawings of multigraphs~\cite{ptt-cbne-20,ptt-cbne-22}, that is, where every continuous transformation that transforms one copy of an edge to another passes over a vertex. Interestingly, the upper bound for~$3$-planar graphs is tight in this more general setting only~\cite{bekos2017optimal,bungener2024improving}.

The \emph{crossing number} of a drawing~$\Gamma$ is the number of edge crossings in~$\Gamma$. The \emph{crossing number}~$\mathrm{cr}(G)$ of a graph~$G$ is the minimum crossing number over all drawings of~$G$. By definition every~$k$-planar graph~$G$ admits a~$k$-plane drawing and thus
\begin{equation}\label{eq:simple}\tag{S}
\mathrm{cr}(G)\le\frac{km}{2}, 
\end{equation}
where~$m$ denotes the number of edges in~$G$. For a~$k$-planar graph, this simple inequality connects upper bounds on the number of edges with lower bounds on the crossing number. Both of these come together in the well-known Crossing Lemma~\cite[Chapter~45]{az-pb-18}, as the best constants in the Crossing Lemma are obtained by analyzing~$k$-plane drawings~\cite{ackerman2019topological,bungener2024improving,prtt-iclfmcsg-04,prtt-iclfmcsg-06}. 
Conversely, combining the lower bound on~$\mathrm{cr}(G)$ from the Crossing Lemma with an upper bound on~$\mathrm{cr}(G)$ we obtain an upper bound on the number of edges in~$G$. While \eqref{eq:simple} would work here, it is probably not an ideal choice because the graphs for which \eqref{eq:simple} is tight might be very different from those graphs that have a maximum number of edges, for any fixed~$n$. For instance, for a~$1$-planar graph~$G$ we have~$\mathrm{cr}(G)\le n-2$~\cite[Proposition~4.4]{suzuki2020beyondPlanar}, which beats the bound we get by plugging~$m\le 4n-8$ into~\eqref{eq:simple} by a factor of two. Can we obtain similar improvements by bounding~$\mathrm{cr}(G)$ in terms of~$n$, rather than~$m$, for~$k\ge 2$?

Indeed, very recently it has been shown that~$\mathrm{cr}(G)\le 3.\overline{3}n$ if~$G$ is~$2$-planar and~$\mathrm{cr}(G)\le 6.6n$ if~$G$ is~$3$-planar~\cite{bekos2024k-planar}. 
There is some indication that the bound for~$2$-planar graphs could be tight up to an additive constant, as it is achieved by the standard drawings of optimal~$2$-planar graphs (\cref{fig:tightness_2}). But the crossing number of these graphs is not known. 

\begin{figure}[htbp]
        \centering
        \includegraphics{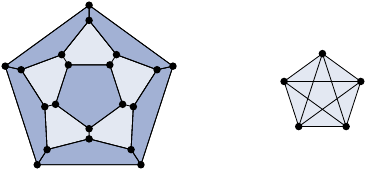}
        \caption{Construction by Pach and \Toth{}~\cite[Figure~3]{pt-gdfcpe-97}.
        Left: A planar drawing with pentagonal faces. Right: To each pentagonal face all diagonals are added.}
        \label{fig:tightness_2}
\end{figure}

In contrast, there exists a family of simple~$3$-planar graphs with~$5.5n-15$ edges whose standard drawings have~$5.5n-21$ crossings (\cref{fig:tightness_3}).
Thus, there is a gap of~$1.1n$ between the lower and the upper bound for the crossing number of~$3$-plane drawings.

\begin{figure}[htbp]
    \centering
    \includegraphics[page=2]{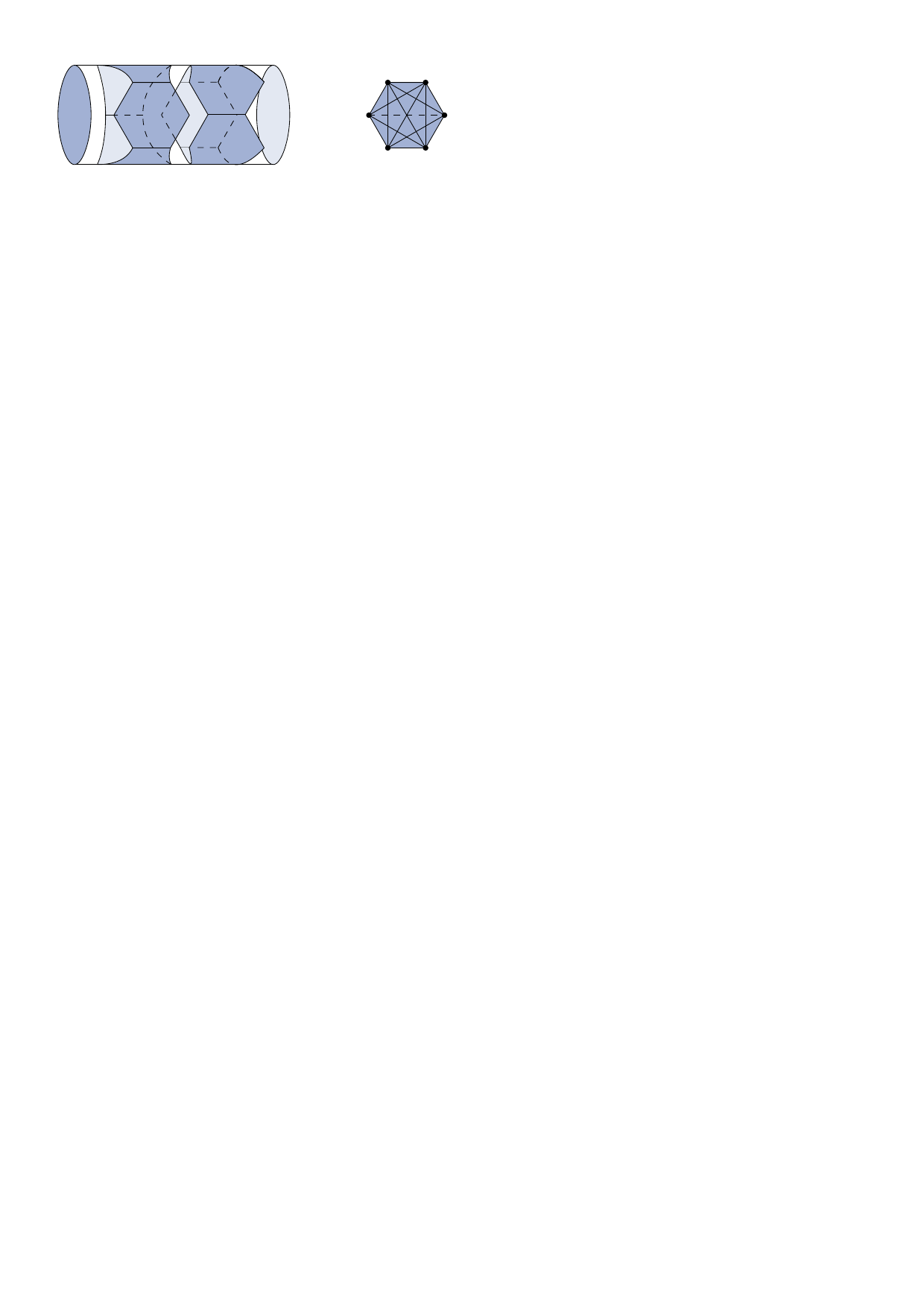}
    \caption{Construction from \cite[Figure~8]{prtt-iclfmcsg-06}. Left: A cylinder with two layers, each consisting of three hexagonal faces. Right: To each face of a layer all but one diagonal is added. To the top and bottom face six diagonals are added. 
    Missing diagonals are represented by dashed lines.  }
    \label{fig:tightness_3}
\end{figure}

\subparagraph*{Results.}

We close the gap and present an upper bound on the crossing number 
of~$3$-plane drawings that is tight up to an additive constant.
Using the same approach we also obtain an alternative proof to show that a~$3$-planar $n$-vertex graph has at most~$5.5(n-2)$ edges.

\begin{restatable}{theorem}{main}
    \label{thm:3-planar_upper_bound_crossings_and_edges}
    Every non-homotopic $3$-plane drawing of a graph on $n$~vertices, $n \geq 3$, contains at most $5.5(n-2)$ edges and at most $5.5(n-2)$ crossings.
\end{restatable}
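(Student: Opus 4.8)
The plan is to apply the Density Formula of~\cite{kaufmann2023density} to the planarization of the drawing and to reduce both claimed bounds to a single structural analysis of how small cells can be arranged next to one another.

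\textbf{Setup via the Density Formula.}
Let $\Gamma$ be the drawing, with $n$ vertices, $m$ edges and $c$ crossings, and let $\Gamma^{\times}$ be the planarization obtained by replacing each crossing with a degree-$4$ vertex. The faces of $\Gamma^{\times}$ are the \emph{cells} of $\Gamma$; for a cell $C$ let $|C|$ denote the length of its boundary walk, which we split into \emph{vertex-corners} (at original vertices) and \emph{crossing-corners}. Then $\Gamma^{\times}$ has $n+c$ vertices and $m+2c$ edges, hence $m+c-n+2$ cells by Euler's formula, and $\sum_{C}|C| = 2m+4c$. Combining these counts linearly yields the identities
\[
  m = 2(n-2) + \tfrac12\sum_{C}\bigl(4-|C|\bigr),
  \qquad
  c = m - 3(n-2) + \sum_{C}\bigl(|C|-3\bigr).
\]
So $m\le 5.5(n-2)$ is equivalent to $\sum_{C}(4-|C|)\le 7(n-2)$, an inequality to which only bigons (weight~$2$) and triangular cells (weight~$1$) contribute positively; and, once the edge bound is available, $c\le 5.5(n-2)$ follows from $\sum_{C}(|C|-3)\le 8.5(n-2)-m$, i.e.\ from an upper bound on the total ``excess size'' $\sum_{|C|\ge 4}(|C|-3)$ of the large cells in terms of $n$, $m$, and the number of bigons. (The trivial $c\le\tfrac32 m$ only gives $c\le 8.25(n-2)$, so a genuine argument is needed here.)

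\textbf{Classifying and charging small cells.}
I would then classify bigons and triangular cells---and, for the crossing bound, also $4$-cells and slightly larger ones---by the number of their corners that lie at crossings. The non-homotopic hypothesis rules out the most degenerate types (for instance an empty bigon bounded by two homotopic parallel edges), and a triangular cell with three vertex-corners is just an empty triangle of~$G$. The core is a discharging step: give each cell the charge $4-|C|$ (respectively a charge dictated by the second identity), and redistribute it between adjacent cells, or from cells to incident vertices and crossings, so that in the end every vertex carries only $O(1)$ charge. The $3$-plane condition is what makes this work: since at most three crossings lie on any single edge, a small cell with many crossing-corners is forced to border enough larger cells or enough original vertices, and clusters of bigons and triangular cells around a common crossing or a common edge have bounded size and are paid for by their surroundings.

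\textbf{Expected main obstacle.}
I expect the real difficulty to sit in the enumeration of the \emph{configurations of cells} around a single crossing and around a single edge: a bigon can be flanked on both sides by triangular cells, two triangular cells can share an edge, and one crossing-corner may belong to several small cells at once, so one must bound the combined weight of each such cluster and check that the worst case is exactly the cylinder construction of~\cref{fig:tightness_3}. Forcing the constant to come out as precisely~$5.5$ (and not merely $5.5(n-2)+O(1)$), and doing this simultaneously for edges and for crossings, leaves the charging essentially tight, so the case analysis has very little room to spare; handling the cells incident to the outer face and the vertices of small degree contributes further, though routine, special cases.
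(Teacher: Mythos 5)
Your setup is sound: the two identities you derive from Euler's formula applied to the planarization are correct, and they are, up to the choice of linear combination, exactly the Density Formula that the paper invokes (the paper uses the combination corresponding to $t=5$, so that all cells of size at least six contribute negatively; your combination makes triangular cells the positive contributors instead, which is also workable in principle). The reduction of both bounds to controlling the total weight of small cells is the right first step, and your observation that the trivial $c\le\tfrac32 m$ is insufficient is correct.

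However, the proof stops exactly where the theorem begins. Everything after ``Classifying and charging small cells'' is a declaration of intent rather than an argument: you state that a discharging scheme exists, but give no discharging rules, no enumeration of the clusters, and no verification that the charges balance to $5.5$. This deferred step is not routine. Concretely, for the edge bound you need $\sum_C\bigl(4-|C|\bigr)\le 7(n-2)$, and after bigons are excluded (by the non-homotopic and no-adjacent-crossings assumptions) the positive contributors are triangular cells of the planarization; but the total number of corners is $2m+4c$, which in the extremal drawings is about $33(n-2)$, so a drawing could a priori have up to roughly $11(n-2)$ triangular cells against the required $7(n-2)$. Closing that gap is precisely where all the work lies: the paper does it by first saturating the drawing (adding an uncrossed edge inside every cell whose boundary contains two vertices not already joined along it) and then proving $21$ counting (in)equalities about \emph{trails} and multi-cell \emph{configurations} --- e.g., that a trail joining certain small cells forces an adjacent cell containing two vertices, hence an uncrossed edge and a cell of size five --- which are combined as a linear program with explicitly verified coefficients. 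Your proposal contains no substitute for this analysis, and you acknowledge as much in the ``expected main obstacle'' paragraph. As written, the argument does not establish either bound. (Two minor additional points: the drawings live on the sphere, so there is no outer face to special-case; and the crossing bound in the paper is not derived sequentially from the edge bound but from the same system of constraints with different multipliers, which is what lets both constants come out as exactly $5.5$.)
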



Our proof relies on the recently developed Density Formula (cf.~\cref{thm:density_formula} below) for topological drawings of graphs~\cite{kaufmann2023density}. 
It relates the number of vertices, edges, and cells of various sizes in a drawing, in a way similar to the Euler Formula in the case of plane graphs.
Previously, the Density Formula has been used to derive upper bounds on the number of edges in~$k$-plane drawings, for~$k\le 2$~\cite{kaufmann2023density}.
In order to apply it to~$3$-plane drawings, to bound the number of crossings, and to obtain tight bounds, we study cells not only in isolation but also as part of what we call \emph{configurations}, which consist of several connected cells.
We then develop a number of new constraints that relate the number of cells and/or configurations of a certain type in any~$3$-plane drawing.
The combination of all these constraints with the Density Formula yields a linear program that we can solve in two different ways---maximizing either the number of edges or the number of crossings---to prove \cref{thm:3-planar_upper_bound_crossings_and_edges}.

Using \cref{thm:3-planar_upper_bound_crossings_and_edges} we can derive better upper bounds on the number of edges in~$k$-planar graphs without short cycles. Plugging our bound of at most~$5.5n$ crossings into the proofs from~\cite{bekos2024k-planar} we obtain that
\begin{itemize}
\item $C_3$-free $3$-planar graphs on~$n$ vertices have at most~$\sqrt[3]{891/8}n<4.812n$ edges (down from~$\approx 5.113n$~\cite[Theorem~18]{bekos2024k-planar}),
\item $C_4$-free $3$-planar graphs on~$n$ vertices have at most~$\sqrt[3]{1'254'825/12'544}n<4.643n$ edges (down from~$\approx 4.933n$~\cite[Theorem~20]{bekos2024k-planar}), and
\item $3$-planar graphs of girth~$5$ on~$n$ vertices have at most~$\sqrt[3]{122,793/1600}n<4.25n$ edges (down from~$\approx 4.516n$~\cite[Theorem~21]{bekos2024k-planar}).
\end{itemize}

\section{Preliminaries}

We consider \emph{drawings} of graphs on the sphere with vertices as points, edges as Jordan arcs, and the usual assumption that any two edges share only finitely many points, each being a common endpoint or a proper crossing, and that no three edges cross in the same point.
We also assume that no edge crosses itself and that no two adjacent edges cross.
As is customary, we do not distinguish between the points and curves in~$\Gamma$ and the vertices and edges of~$G$ they represent, respectively.
The graphs we consider may contain parallel edges, but no loops.
In order to avoid an arbitrary number of parallel edges within a small corridor, a drawing~$\Gamma$ is called \emph{non-homotopic} if every region that is bounded by exactly two parts of edges, called a \emph{lens}, contains a crossing or a vertex in its interior; see \cref{fig:lens}. 

\begin{figure}[htbp]
    \centering
    \includegraphics{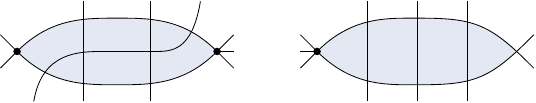}
    \caption{\emph{Left:} A lens (blue) with two crossings in its interior.
    \emph{Right:} An empty lens (blue).}
    \label{fig:lens}
\end{figure}

Let~$\Gamma$ be a drawing of a graph~$G=(V,E)$.
If every edge is crossed at most three times, we say that~$\Gamma$ is \emph{$3$-plane}.
We denote the set of crossings by $X$. 
For~$i \in \set{0,1,2,3}$, let $E_i \subseteq E$ be the set of all edges with exactly $i$ crossings, and let~$E_{\times} = E_1 \cup E_2 \cup E_3$.

\subparagraph{Edge-Segments and Cells.}

An edge with $i$ crossings is split into $i+1$ parts, called \emph{edge-segments}.
An edge-segment is \emph{inner} if both its endpoints are crossings, and \emph{outer} otherwise.
The \emph{planarization} of~$\Gamma$ is the graph obtained by replacing every crossing~$x$ with a vertex of degree~$4$ that is incident to the four edge-segments of~$x$.
We say that the drawing~$\Gamma$ is \emph{connected}, if its planarization is a connected graph, and shall henceforth only consider connected drawings.
Removing all edges and vertices of~$\Gamma$ splits the sphere into several components, called \emph{cells}. 
We denote the set of all cells by~$\calC$.
Since~$\Gamma$ is connected, the \emph{boundary} $\partial c$ of a cell~$c$ corresponds to a cyclic sequence alternating between edge-segments and elements in $V \cup X$ (i.e., vertices and crossings).
If a crossing or a vertex appears multiple times on the boundary of the same cell~$c$, then~$c$ is \emph{degenerate}.
The \emph{size} of a cell~$c$, denoted by $\norm{c}$, is the number of vertex incidences plus the number of edge-segment incidences of~$c$.
Note that incidences with crossings are not taken into account, see \cref{fig:cell-characterization} for examples.
For~$a \in \N$, we denote by $\calC_a = \{ c \in \calC \colon \norm{c} = a\}$ the set of all cells of size~$a$.
\begin{figure}
    \centering
    \includegraphics[page=1]{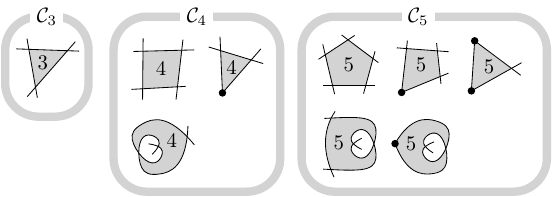}
    \caption{Taken from \cite[Figure~2]{kaufmann2023density}. All types of cells~$c$ of size~$\norm{c} \leq 5$ in a non-homotopic connected drawing on at least three vertices. The bottom row shows the degenerate cells.}
    \label{fig:cell-characterization}
\end{figure}

\begin{theorem}[Density Formula \cite{kaufmann2023density}]
\label{thm:density_formula}
    If $\Gamma$ is a connected drawing with at least one edge, and $t$ is a real number, then
    \[
    \abs{E} = t(\abs{V}-2) - \sum_{c \in \calC}\left( \frac{t-1}{4}\norm{c} - t \right) - \abs{X}
    \]
\end{theorem}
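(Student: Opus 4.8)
The plan is to derive the formula from Euler's formula applied to the \emph{planarization} $\Gamma'$ of $\Gamma$, combined with two double-counting identities for the cell sizes. Recall that $\Gamma'$ is a plane multigraph on the sphere whose vertices are $V \cup X$, whose edges are the edge-segments, and whose faces are exactly the cells in $\calC$. Since $\Gamma$ is connected and has at least one edge, $\Gamma'$ is a connected plane graph, so Euler's formula $\abs{V'} - \abs{E'} + \abs{F'} = 2$ applies.

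First I would count the combinatorial quantities of $\Gamma'$. The number of vertices is $\abs{V} + \abs{X}$. For the edges, each edge $e \in E$ carrying $i$ crossings is subdivided into $i+1$ edge-segments; summing over all edges and using that every crossing lies on exactly two edges gives $\abs{E} + 2\abs{X}$ edge-segments in total. Plugging $\abs{V'} = \abs{V} + \abs{X}$, $\abs{E'} = \abs{E} + 2\abs{X}$, and $\abs{F'} = \abs{\calC}$ into Euler's formula yields the first identity
\[
\abs{\calC} = 2 - \abs{V} + \abs{E} + \abs{X}.
\]

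Next I would establish a second identity for the total cell size $\sum_{c}\norm{c}$ by double counting incidences. The size counts vertex incidences and edge-segment incidences, but deliberately \emph{not} crossing incidences. Each edge-segment has two sides, each bordering one cell, so it contributes exactly $2$ to the total over all cells (counted with multiplicity, which also covers degenerate cells where a segment borders the same cell twice); hence the edge-segments contribute $2(\abs{E} + 2\abs{X})$. For vertices, a vertex of degree $d$ splits its neighborhood into $d$ angular sectors, each belonging to one cell, so it appears $d$ times across all cell boundaries; summing over $V$ gives $\sum_v \deg(v) = 2\abs{E}$. Crossings, which contribute $4$ sectors each, are excluded by definition. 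Altogether
\[
\sum_{c \in \calC}\norm{c} = 2\abs{E} + 2(\abs{E} + 2\abs{X}) = 4\abs{E} + 4\abs{X}.
\]

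Finally I would combine the two identities. Viewing the asserted equation as an affine function of the parameter $t$, I rewrite its right-hand side as $t\bigl((\abs{V}-2) - \tfrac14\sum_c\norm{c} + \abs{\calC}\bigr) + \bigl(\tfrac14\sum_c\norm{c} - \abs{X}\bigr)$. The claim that this equals $\abs{E}$ for \emph{every} real $t$ is equivalent to the coefficient of $t$ vanishing and the constant term equalling $\abs{E}$; by the two identities above these read $\abs{\calC} = 2 - \abs{V} + \tfrac14\sum_c\norm{c}$ and $\tfrac14\sum_c\norm{c} = \abs{E} + \abs{X}$, both of which have been verified. Alternatively one can simply substitute both identities into the right-hand side and simplify. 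The only delicate points are ensuring Euler's formula applies (this is precisely why connectivity and the existence of an edge are assumed) and checking that the incidence counts remain valid for degenerate cells; since all incidences are counted with multiplicity, the latter is handled automatically, so I expect no substantial obstacle beyond careful bookkeeping.
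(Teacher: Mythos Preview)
Your derivation is correct. Note, however, that the paper does not actually prove \cref{thm:density_formula}; it merely states the result and cites~\cite{kaufmann2023density}. Your argument---Euler's formula for the planarization combined with the two double-counting identities $\abs{\calC}=2-\abs{V}+\abs{E}+\abs{X}$ and $\sum_c\norm{c}=4\abs{E}+4\abs{X}$---is the standard way to obtain this formula and is presumably what appears in the cited reference as well, so there is nothing to compare against within this paper.
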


To apply the Density Formula, we count the cells of different sizes.
We distinguish several types of cells based on their size and boundary and denote these by small pictograms, such as $\CIVvI$ or $\CVvI$.
We call a cell \emph{large} if it has size at least~$6$ and write $\Clarge$ for this type of cells.
By abuse of notation, we denote the number of cells of a certain type by their pictogram.

\subparagraph{Configurations}
are connected labeled embedded subgraphs of the planarization of a drawing~$\Gamma$. 
We denote configuration types by pictograms such as $\YTriTriTri$ and $\YTriTri$ (see \cref{fig:configurations_trails}).

\begin{figure}[htbp]
    \centering
    \includegraphics[page=2]{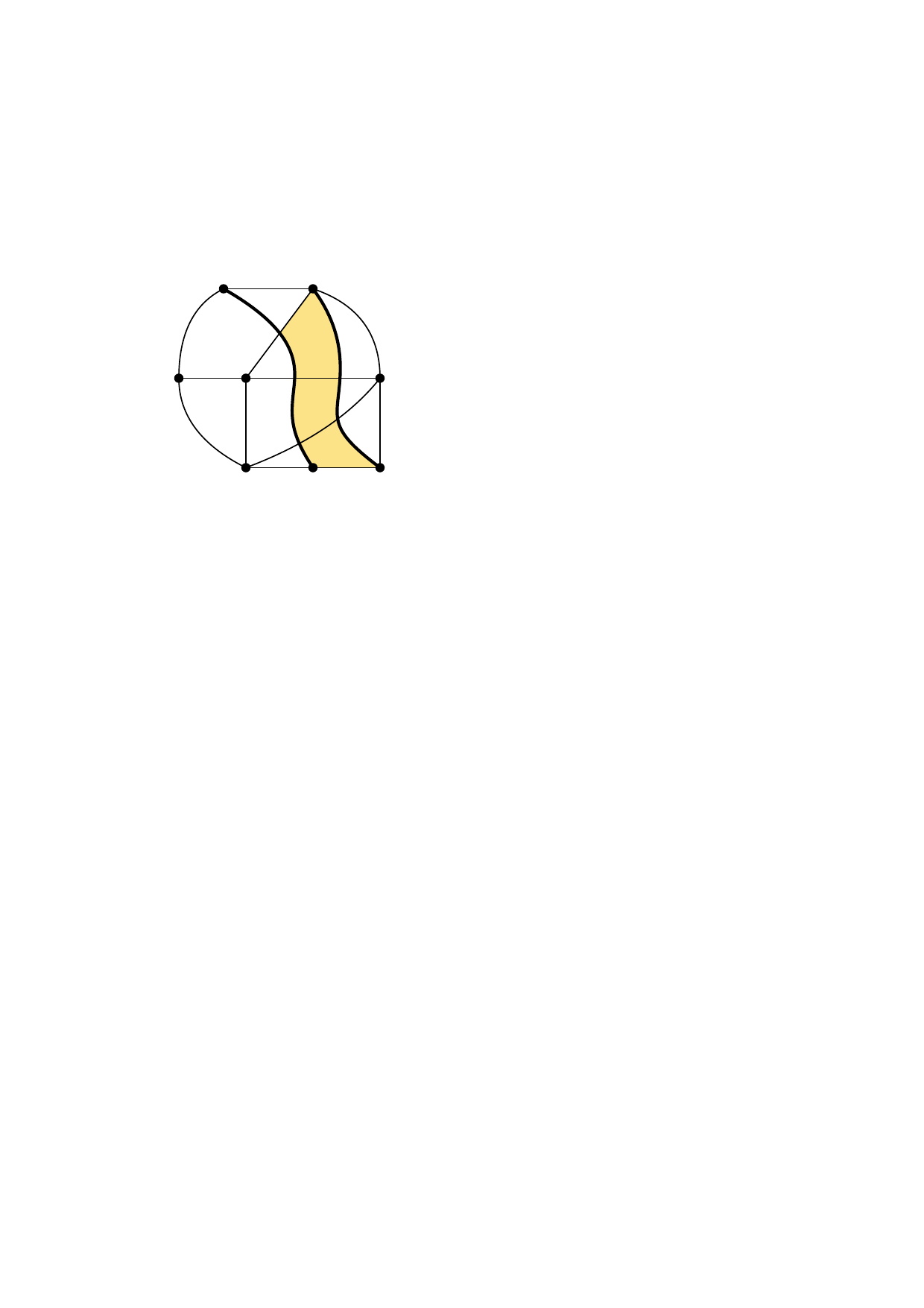}
    \caption{Left: A $\YTriTriTri$-configuration (light blue) and a $\YTriTri$-configuration (dark blue). Right: A $\CVvO$-$\Clarge$-trail (dark blue) and its bounding edges (thick).}
    \label{fig:configurations_trails}
\end{figure}

A configuration is an \emph{$A$-$B$-trail} if its dual is a path~$P$ whose endpoints are cells of type~$A\ne\CIVvO$ and~$B\ne\CIVvO$, respectively, whose edges correspond to inner segments, and whose interior vertices are \CIVvO-cells whose two edge-segments on~$P$ are opposite along their boundary, see \cref{fig:configurations_trails}.
We denote by $\cor{A}{B}$ the number of $A$-$B$-trails in~$\Gamma$.

\begin{restatable}{observation}{obscorridor}
    \label{obs:one_corridor_per_inner_segment}
    Every inner edge-segment of a drawing 
    is interior to exactly one trail.
\end{restatable}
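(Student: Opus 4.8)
The plan is to show that the interior edge-segments of the trails partition the set of inner edge-segments. Write the dual path of a trail $T$ as $c_0, s_1, c_1, \dots, s_k, c_k$, so that $c_0$ and $c_k$ are not $\CIVvO$-cells, each of $c_1, \dots, c_{k-1}$ is a $\CIVvO$-cell whose two edge-segments $s_i, s_{i+1}$ on the path are opposite along its boundary, and the edge-segments interior to $T$ (those that are not bounding edges of $T$) are precisely $s_1, \dots, s_k$, the duals of the edges of the path. The whole argument rests on one property of $\CIVvO$-cells: since such a cell has no vertex on its boundary, each of its four boundary edge-segments is flanked on the boundary by two crossings, hence has both endpoints at crossings and is therefore \emph{inner}; moreover the four boundary segments split canonically into two opposite pairs, so each boundary segment of a $\CIVvO$-cell has a unique \emph{partner} in that cell, and the partner is again inner.

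For \emph{existence}, I would fix an inner edge-segment $s$ with incident cells $a$ and $b$, and grow a walk from $s$: on the $a$-side, stop if $a$ is not a $\CIVvO$-cell, and otherwise replace $s$ by its partner in $a$, step to the cell on the other side of that partner, and repeat; proceed symmetrically on the $b$-side. If both walks halt, the sequence of cells visited realizes the dual path of a trail having $s$ among its interior segments, and the walk can never halt at a $\CIVvO$-cell, so it halts exactly at the first non-$\CIVvO$-cell reached on each side. \emph{Uniqueness} is then immediate: if $s$ is interior to a trail $T$, then tracing the dual path of $T$ outward from $s$ is forced, since at every interior cell the path must continue through the partner segment, and it must stop precisely upon first meeting a non-$\CIVvO$-cell, because the interior cells of a trail are $\CIVvO$-cells and its two endpoints are not. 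Hence $T$ must coincide with the walk above.

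The only real obstacle is to show the walk terminates on both sides, equivalently that it never runs into a cyclic chain $d_1,\dots,d_m$ of $\CIVvO$-cells (indices mod $m$) in which $d_{i-1}$ and $d_i$ share a wall $w_i$ with $w_{i-1}, w_i$ opposite in $d_i$. This is where the topology of the drawing is needed. Such a chain glues up into an embedded annulus $A$, whose two boundary circles consist of the remaining ``rail'' segments of the $d_i$ interleaved with crossings, and $A$ cannot fill the whole sphere since $|V|\ge 3$. Fix one boundary circle of $A$ and, for each $i$, let $p_i$ be the endpoint of $w_i$ on it; then $p_i$ is a crossing incident to both $d_{i-1}$ and $d_i$, and the two rails meeting at $p_i$ (one of $d_{i-1}$, one of $d_i$) are opposite at $p_i$ — among the four segments around $p_i$, the wall $w_i$ is opposite to its own continuation, leaving the two rails as the other opposite pair — hence these two rails belong to the same edge. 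Going around the cycle, all $m$ rails on that circle (pairwise distinct, as the $d_i$ are distinct) lie on a single edge $f$ and occur consecutively along $f$, closing up cyclically; this forces $f$ to contain a closed sub-arc, contradicting that edges are Jordan arcs that do not cross themselves. The degenerate situations ($m\le 2$, a segment that is its own partner in some $\CIVvO$-cell, or the $d_i$ not all distinct) would be dispatched by the same local analysis, with the extreme cases again ruled out because the annulus would then cover the sphere. I expect this last step — ruling out cyclic chains of $\CIVvO$-cells — to be the main work; everything else is bookkeeping with the conventions from the Density Formula.
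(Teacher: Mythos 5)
Your proof is correct, but note that the paper itself offers no proof of this statement at all: it is labelled an Observation and treated as immediate from the definitions. Your existence-plus-uniqueness scheme (grow the forced walk from $s$ through opposite partner segments of $\CIVvO$-cells on both sides, stop at the first non-$\CIVvO$-cell) is exactly the right formalization, and your identification of the interior segments of a trail with the walls $s_1,\dots,s_k$ matches the paper's conventions. The one place where you invest real effort --- ruling out cyclic chains of $\CIVvO$-cells via the annulus/closed-rail argument --- is sound (the rails along one boundary circle are consecutive segments of a single edge whose segment order would have to be cyclic, contradicting that edges are non-self-crossing Jordan arcs), but it is heavier machinery than the setting requires: in the $3$-plane drawings where the observation is actually applied, each bounding edge has at most three crossings and hence at most two inner segments, so a chain of $\CIVvO$-cells has length at most two and termination is automatic; even in general, your local observation that consecutive rails are opposite at their shared crossing already forces a cyclic segment order on one edge, so the annulus itself is not needed. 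The degenerate cases you defer (repeated cells $d_i$, a segment incident to the same cell on both sides) are genuinely not dispatched in your write-up, but they fall to the same ``an edge's segments are linearly, not cyclically, ordered'' argument, so the gap is cosmetic rather than structural. In short: you have supplied a complete justification for something the paper asserts without proof, at the cost of some avoidable topology.
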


A drawing is \emph{filled} if any two vertices~$u \neq v$  on the boundary of a cell~$c$ are joined by an uncrossed edge along~$\partial c$.
A $3$-plane, non-homotopic, connected, filled drawing of a graph on at least three vertices is \emph{$3$-saturated}.

\section{Crossing-Number and Edge-Density via Density Formula}
\label{sec:overview}
 
To obtain our upper bounds we prove a number of (in)equalities, each relating the number of certain cells, configurations, edges and crossings.
The Density Formula is one such equality.
In total, we obtain a system of linear inequalities where each quantity (such as $\abs{E}$, $(\abs{V}-2)$, $\abs{X}$, $\abs{\calC_2}$, $\abs{E_1}$, $\CIIIvO$, $\Clarge$, etc.) can be considered as a variable.
Setting the ``variable'' $(\abs{V}-2)$ to $1$, we can maximize the value of $\abs{X}$ by solving the obtained linear program~(LP).
The resulting maximum represents the number of crossings per vertex; more precisely, per $(\abs{V}-2)$.
We want to prove that the number of crossings in any $3$-plane drawing on $n$~vertices is at most~$5.5(n-2)$.
It thus suffices to show that the maximum value of $\abs{X}$ in the 
LP is $5.5$ if we set the variable representing the number of vertices to~$1$.
Our LP comprises~$21$~constraints, which are summarized in \cref{fig:certificates} in the appendix, and whose validity is proven in \cref{sec:constraints}.
Summing up all constraints with the coefficients in \cref{fig:certificates}, we obtain $\abs{X} \leq 5.5(\abs{V}-2)$.

If we maximize $\abs{E}$ instead, we obtain $\abs{E} \leq 5.5(\abs{V}-2)$ from the same constraints (with different coefficients; also in \cref{fig:certificates}).
Hence, by verifying that all $21$~constraints hold for every connected, non-homotopic $3$-plane drawing on $n \geq 3$ vertices, we obtain our result.

\main*

\section{Relating Crossing, Edge, Cell, Trail, and Configuration Counts}\label{sec:constraints}

In this section, we present a number of (in)equalities, each relating the number of certain cells, configurations, edges, or crossings. Due to space constraints we discuss only two of these inequalities, the rest can be found in \cref{app:constraints}. 
Our proof relies on the Density Formula for~$t=5$.
For this value of~$t$, $\Clarge$-cells contribute negatively in the formula. 
Intuitively, large cells account for many crossings: 
If many trails end in large cells, we obtain a lower bound on the sum~$\sum_{a \geq 6}a \abs{\calC_a}$ of sizes of large cells.
This yields a lower bound on the sum~$\sum_{c \in \calC_{\geq 6}}(\norm{c}-5)$ in the Density Formula, 
where $\calC_{\geq 6}$ denotes the set of large cells.
If there are few such trails, we obtain configurations that contain many crossed edges.

\begin{lemma}
    If $\Gamma$ is a $3$-saturated drawing, then
    \begin{align}
    \tag{5.A}
    \label{eq:5A}
    \sum_{a \geq 6}a \abs{\calC_a} \geq \cor{\CIVvI}{\Clarge} + \cor{\CVvI}{\Clarge} + \cor{\CIIIvO}{\Clarge} + \cor{\CVvO}{\Clarge} + 5\mul\CVIvII.
    \end{align}
\end{lemma}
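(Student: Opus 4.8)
The plan is to set up a charging argument that distributes the sizes of large cells among the trails that end in them. Every trail counted on the right-hand side has exactly one endpoint which is a large cell (the other endpoint being a \CIVvI, \CVvI, \CIIIvO, \CVvO, or, in the case of $\CVIvII$-configurations, whatever that pictogram denotes—presumably a pair of large cells joined in a fixed pattern). So first I would make precise, using \cref{obs:one_corridor_per_inner_segment}, that distinct trails ending at a large cell $c$ enter $c$ through distinct inner edge-segments on $\partial c$: since each inner segment is interior to exactly one trail, and a trail reaching $c$ must use the inner segment of $\partial c$ adjacent to $c$ along the dual path, two trails ending in $c$ cannot share that final segment. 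This gives an injection from {trails of the listed types ending at $c$} into {inner edge-segment incidences of $\partial c$}, hence into the $\norm{c}$ edge-segment-incidences of $c$.

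Next I would argue that each such trail in fact consumes \emph{more} than one unit of $\norm{c}$, so that the total charge to $c$ is at most $\norm{c}$ even though the right-hand side has a coefficient $1$ on most trail-counts and $5$ on $\CVIvII$. The key observation is that a large cell has size at least $6$, and the endpoints of a trail of type $\CIVvI$, $\CVvI$, $\CIIIvO$, or $\CVvO$ force structure on $\partial c$ near where the trail attaches: because the drawing is $3$-saturated (in particular filled), between consecutive inner-segment attachment points on $\partial c$ there must be at least one further edge-segment or vertex incidence, so the $t$ trails attaching to $c$ account for at least $2t$ incidences—except we only need to beat the right-hand side, so I would instead directly bound $\norm{c}$ from below by the number of attaching trails plus a surplus and then sum over all large cells. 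For the $\CVIvII$-term with coefficient $5$, I would use that such a configuration forces a large cell incident to essentially all its size-$6$-worth of segments to the trail, i.e. a single $\CVIvII$-configuration "uses up" a large cell of size $\geq 6$ almost entirely (hence the coefficient $5$, reflecting the $\norm{c}-1 \ge 5$ slack, or the fact that it simultaneously blocks $5$ other potential attachments); here I would need to unwind the definition of the $\CVIvII$ pictogram from \cref{fig:configurations_trails} and the icon definitions.

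The main steps, in order: (1) fix a large cell $c$ and collect all trails of the six listed types having $c$ as an endpoint; (2) via \cref{obs:one_corridor_per_inner_segment}, show these trails attach to $c$ through pairwise distinct inner segments of $\partial c$; (3) use fillingness/$3$-saturation to insert, between attachment points, enough additional boundary incidences so that the local contribution to $\norm{c}$ per trail is at least the corresponding right-hand-side coefficient; (4) handle the $\CVIvII$-configurations separately, showing each forces a dedicated large cell contributing $\ge 5$ to the left side per unit on the right; (5) sum $\norm{c}$ over all $c$ with $\norm{c}\ge 6$ to get $\sum_{a\ge 6} a\abs{\calC_a}$ on the left and the full right-hand side, being careful that no trail and no $\CVIvII$-configuration is charged to two different large cells (again immediate from \cref{obs:one_corridor_per_inner_segment} since a trail has at most one large endpoint, after noting that a $\CIVvI$-\Clarge-trail etc. has exactly one large endpoint by type).

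The hard part will be step (3)–(4): correctly reading off from the pictograms what the boundary of the relevant endpoint cells looks like, and verifying that $3$-saturation genuinely forces the claimed surplus incidences on $\partial c$ near each attachment—in particular ruling out degenerate large cells or configurations where two trails attach "back to back" with no separating incidence. A subtlety is whether a single large cell can be the endpoint of trails of several different types at once and whether the counts then still add up; I expect this is fine because the distinct-inner-segment argument already separates them, but it needs to be stated carefully. Establishing the coefficient $5$ for $\CVIvII$ precisely (as opposed to some smaller constant) is the other delicate point and will likely require a short dedicated case analysis of that configuration.
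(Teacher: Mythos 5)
Your step (2) — using \cref{obs:one_corridor_per_inner_segment} to give each trail ending in a large cell its own inner edge-segment incidence on that cell, and then summing $\norm{c}$ over all large cells — is exactly the paper's argument for the four trail terms, and it already suffices for them: each of those terms has coefficient $1$, so one distinct edge-segment incidence per trail is all that is needed. Your step (3) is therefore a red herring; no "surplus incidence between attachment points" is required, and the filled property plays no role here. Trying to force extra separating incidences between consecutive attachments would in fact be the wrong direction, since nothing prevents two trails from entering a large cell through inner edge-segments that share a crossing.

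The genuine gap is in your treatment of the term $5\mul\CVIvII$. You guess that $\CVIvII$ denotes a configuration ("presumably a pair of large cells joined in a fixed pattern"), but it is a \emph{cell} type: a cell of size~$6$ with two vertices, three outer edge-segments, and exactly one inner edge-segment on its boundary (the naming convention is the same as for $\CIVvI$, $\CVvO$, etc.). With that reading the coefficient $5$ is immediate and requires no case analysis: a $\CVIvII$-cell is itself large, it is the endpoint of exactly one trail (which accounts for its single inner edge-segment incidence, already counted in one of the four $\cor{\cdot}{\Clarge}$ terms), and its remaining $6-1=5$ boundary incidences (three outer edge-segments plus two vertices) are not touched by any trail, so they contribute an additional $5$ to $\sum_{a\ge 6} a\abs{\calC_a}$ with no double counting. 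Your step (4) as written — a "dedicated large cell forced by the configuration" — does not identify this cell or explain why its contribution is disjoint from the trail contributions, so as it stands the $5\mul\CVIvII$ term is not established.
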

\begin{proof}
    As we want to obtain a lower bound on the sum $\sum_{a \geq 6}a \abs{\calC_a}$, it suffices to count the number of vertex and edge-segment incidences of large cells. 
    Each trail that ends in a large cell enters this cell via an inner edge-segment.
    As no two trails share such an inner edge-segment, we obtain one edge-segment incidence for each such trail.

    A \CVIvII-cell is in particular large. 
    As it is incident to only one inner-segment, it is the endpoint of only one trail.
    We have not counted the remaining three edge-segment incidences and the two vertex incidences when considering trails.
    Therefore, each \CVIvII-cell yields at least five more edge-segment and vertex incidences.
\end{proof}

\begin{lemma}
    If~$\Gamma$ is a $3$-saturated drawing, then
    \begin{align}
    \tag{3.C}
    \label{eq:3C}
    \cor{\CIIIvO}{\CVvI} \leq \YTriQuadTri.
    \end{align}
\end{lemma}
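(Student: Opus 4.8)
The plan is to construct an injection from the set of $\CIIIvO$-$\CVvI$-trails into the set of $\YTriQuadTri$-configurations. Fix a $\CIIIvO$-$\CVvI$-trail $T$ with dual path $P$, and let $t$ be its $\CIIIvO$-endpoint and $q$ its $\CVvI$-endpoint. Since all three corners of the triangular cell $t$ are crossings, each of its three sides is an inner segment; exactly one side $s$ is the first edge of $P$, separating $t$ from the next cell $c$ of $P$, where $c$ is a $\CIVvO$-cell unless already $c=q$.

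First I would record the boundary of $q$: it is a cyclic alternation $v, e_1, x_1, e_2, x_2, e_3, x_3, e_4$ with $v \in V$ and $x_1,x_2,x_3 \in X$, so $e_2$ and $e_3$ are its only inner segments and $T$ enters $q$ through one of them, say $e_2$. Next, using that $\Gamma$ is $3$-saturated (in particular filled), I would analyse the cells around $q$ and $t$: filling forces uncrossed edges along cell boundaries, and combined with the three-crossings-per-edge bound this pins down the sizes of the cells sharing the outer segments $e_1, e_4$ with $q$ and the cells sharing the two non-trail sides with $t$. The aim of this local analysis is to exhibit a $\YTriQuadTri$-configuration consisting of the triangle $t$, a $\CIVvO$-cell adjacent to it on the side determined by $s$, and a third, triangular cell adjacent to that $\CIVvO$-cell, in such a way that the configuration depends on the side $s$ of $t$ used by $T$, not merely on $t$.

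The last step is injectivity. I would argue that the $\YTriQuadTri$-configuration assigned to $T$ distinguishes $t$ (one specified triangular end) from the rest and determines the inner segment $s$; by \cref{obs:one_corridor_per_inner_segment}, $s$ lies on a unique trail, whose dual path is then forced (its interior cells being $\CIVvO$-cells met in opposite segments), so $T$ is recovered. Conversely, no $\YTriQuadTri$-configuration is produced by two trails, since the two trails would have to enter it through the same segment $s$ at the same triangle $t$, hence coincide.

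I expect the main obstacle to be the middle, local step: showing that $3$-saturation genuinely forces the triangle-quadrilateral-triangle pattern (rather than allowing larger cells), handling the short trail with $c=q$ separately, dealing with degenerate cells, and covering the two possible positions ($e_2$ vs.\ $e_3$) for the entry of $T$ into $q$. Getting the configuration to be attached to the correct side of $t$---so that the assignment is genuinely injective rather than many-to-one at $\CIIIvO$-cells---is the subtle point that makes or breaks the argument.
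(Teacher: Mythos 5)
Your high-level strategy (an injection from $\CIIIvO$-$\CVvI$-trails into $\YTriQuadTri$-configurations) is the same as the paper's, but there are two concrete problems. First, you have misidentified the target configuration: a $\YTriQuadTri$-configuration does not consist of ``the triangle $t$, a $\CIVvO$-cell adjacent to it, and a third triangular cell''. It consists of the two cells of the trail itself --- the $\CIIIvO$-cell and the $\CVvI$-cell, the latter being the ``Quad'' --- together with one additional $\CVvII$-cell attached to the $\CVvI$-cell. In fact the trail contains no $\CIVvO$-cell at all, a fact you explicitly leave open (``$c$ is a $\CIVvO$-cell unless already $c=q$'') but which is the first step of the paper's proof: the two bounding edges of the trail cross each other at the third corner of $t$ and cross every interior segment of the trail, and one of them must in addition contain the second inner segment of $q$; with $j$ interior $\CIVvO$-cells that bounding edge would have $j+3$ crossings, so $j=0$.

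Second, the decisive step --- where $3$-saturation is actually used --- is precisely the part you defer as ``the main obstacle'', and the plan you sketch for it would not go through. Filling does not ``pin down the sizes of the cells sharing the outer segments $e_1,e_4$ with $q$ and the cells sharing the two non-trail sides with $t$''; most of those neighbouring cells are unconstrained. What is needed is one specific cell: the bounding edge containing the second inner segment of $q$ is crossed exactly three times, so its continuation past its last crossing is an outer segment ending at a vertex $u$, and this $u$ together with the vertex $v$ of $q$ lies on the boundary of a single cell $c$ adjacent to $q$ across an outer segment. Saturation then forces $uv\in E$ along $\partial c$, so $c$ is a $\CVvII$-cell, and the trail together with $c$ is the desired $\YTriQuadTri$-configuration; injectivity is then immediate because the configuration contains the trail. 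As written, your proposal targets the wrong configuration and omits the saturation argument, so it is a plan rather than a proof.
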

\begin{proof}
    Consider a \CIIIvO-\CVvI-trail. 
    As every edge is crossed at most three times, the trail contains no $\CIVvO$-cell and we are in the situation represented in \cref{fig:3C}.
    The vertices~$u$ and~$v$ lie on the boundary of a cell~$c$. 
    As the drawing is $3$-saturated, the edge~$uv$ is contained in~$G$ and the cell~$c$ is a $\CVvII$-cell.
    The trail together with~$c$ forms a \YTriQuadTri-configuration.
    As every \CIIIvO-\CVvI-trail is only part of one such configuration, the statement follows.
    \begin{figure}
        \centering
        \includegraphics[page=3]{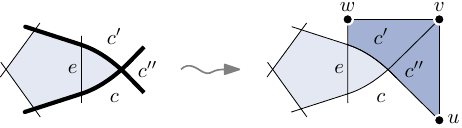}
        \caption{A \CIIIvO-\CVvI-trail (light blue). 
        It forms a \YTriQuadTri-configuration (dark) with an adjacent cell.}
        \label{fig:3C}
    \end{figure}
\end{proof}

\section{Discussion}
The \emph{$k$-planar crossing number} $\mathrm{cr}_k(G)$ is similar to the crossing number, except that the minimum is taken over all $k$-plane drawings of~$G$.
Clearly, $\mathrm{cr}(G) \leq \mathrm{cr}_k(G)$ for all~$k$ and~$G$. But there are $k$-planar $n$-vertex graphs $G$ with $\mathrm{cr}(G) \in \mathcal{O}(k)$ and $\mathrm{cr}_k(G) \in \Omega(kn)$ \cite[Theorem~2]{chimani2024crossingNumberRevisited}.
By \cref{thm:3-planar_upper_bound_crossings_and_edges}, every $3$-plane drawing of an $n$-vertex graph~$G$ has $\abs{X} \leq 5.5(n-2)$~crossings, and hence $\mathrm{cr}(G) \leq \mathrm{cr}_3(G) \leq 5.5(n-2)$. 
Although \cref{thm:3-planar_upper_bound_crossings_and_edges} is tight, 
we could have~$\mathrm{cr}(G),\mathrm{cr}_3(G)<5.5(n-2)$,
and a similar question arises for $2$-planar graphs.

\begin{question}{\ }
    \begin{itemize}
        \item Are there $3$-planar $n$-vertex graphs~$G$ with $\mathrm{cr}_3(G) = 5.5(n-2)$ or $\mathrm{cr}(G) = 5.5(n-2)$?
        \item Are there $2$-planar $n$-vertex graphs~$G$ with $\mathrm{cr}_2(G) = 3.\overline{3}(n-2)$ or $\mathrm{cr}(G) = 3.\overline{3}(n-2)$?
    \end{itemize}
\end{question}

\bibliographystyle{plainurl}
\bibliography{bibliography}

\begin{thebibliography}{10}

\bibitem{ackerman2019topological}
Eyal Ackerman.
\newblock On topological graphs with at most four crossings per edge.
\newblock {\em Computational Geometry}, 85:101574, 2019.
\newblock \href {https://doi.org/10.1016/j.comgeo.2019.101574} {\path{doi:10.1016/j.comgeo.2019.101574}}.

\bibitem{az-pb-18}
Martin Aigner and G{\"u}nter~M. Ziegler.
\newblock {\em Proofs from {THE} {BOOK}}.
\newblock Springer, Berlin, 6th edition, 2018.
\newblock \href {https://doi.org/10.1007/978-3-662-57265-8_45} {\path{doi:10.1007/978-3-662-57265-8_45}}.

\bibitem{bekos2024k-planar}
Michael~A. Bekos, Prosenjit Bose, Aaron B{\"u}ngener, Vida Dujmovi{\'c}, Michael Hoffmann, Michael Kaufmann, Pat Morin, Saeed Odak, and Alexandra Weinberger.
\newblock On $k$-planar graphs without short cycles.
\newblock In {\em Proc. 32nd Internat. Sympos. Graph Drawing Network Visualization (GD 2024)}, volume 320 of {\em LIPIcs}, pages 27:1--27:17, 2024.
\newblock \href {https://doi.org/10.4230/LIPIcs.GD.2024.27} {\path{doi:10.4230/LIPIcs.GD.2024.27}}.

\bibitem{bekos2017optimal}
Michael~A. Bekos, Michael Kaufmann, and Chrysanthi~N. Raftopoulou.
\newblock On optimal 2-and 3-planar graphs.
\newblock In {\em 33rd International Symposium on Computational Geometry (SoCG 2017)}, 2017.
\newblock \href {https://doi.org/10.4230/LIPIcs.SoCG.2017.16} {\path{doi:10.4230/LIPIcs.SoCG.2017.16}}.

\bibitem{bodendiek1983bemerkungen}
Rainer Bodendiek, Heinz Schumacher, and Klaus Wagner.
\newblock Bemerkungen zu einem {S}echsfarbenproblem von {G.}~{R}ingel.
\newblock {\em Abhandlungen aus dem Mathemati\-schen Seminar der Universit{\"a}t Hamburg}, 53:41--52, 1983.
\newblock \href {https://doi.org/10.1007/BF02941309} {\path{doi:10.1007/BF02941309}}.

\bibitem{bungener2024improving}
Aaron B{\"{u}}ngener and Michael Kaufmann.
\newblock Improving the {C}rossing {L}emma by characterizing dense 2-planar and 3-planar graphs.
\newblock In {\em Proc. 32nd Internat. Sympos. Graph Drawing Network Visualization (GD 2024)}, volume 320 of {\em LIPIcs}, pages 29:1--29:22, 2024.
\newblock URL: \url{https://doi.org/10.4230/LIPIcs.GD.2024.29}, \href {https://doi.org/10.4230/LIPICS.GD.2024.29} {\path{doi:10.4230/LIPICS.GD.2024.29}}.

\bibitem{chimani2024crossingNumberRevisited}
Markus Chimani, Torben Donzelmann, Nick Kloster, Melissa Koch, Jan-Jakob V{\"o}llering, and Mirko~H. Wagner.
\newblock Crossing numbers of beyond planar graphs re-revisited: A framework approach.
\newblock In {\em 32nd Internat. Sympos. Graph Drawing Network Visualization (GD 2024)}, volume 320 of {\em LIPIcs}, pages 33:1--33:17, 2024.
\newblock \href {https://doi.org/10.4230/LIPIcs.GD.2024.33} {\path{doi:10.4230/LIPIcs.GD.2024.33}}.

\bibitem{dlm-sgdbp-20}
Walter Didimo, Giuseppe Liotta, and Fabrizio Montecchiani.
\newblock A survey on graph drawing beyond planarity.
\newblock {\em {ACM} Comput. Surv.}, 52(1):1--37, 2020.
\newblock \href {https://doi.org/10.1145/3301281} {\path{doi:10.1145/3301281}}.

\bibitem{kaufmann2023density}
Michael Kaufmann, Boris Klemz, Kristin Knorr, Meghana~M. Reddy, Felix Schr{\"o}der, and Torsten Ueckerdt.
\newblock The density formula: One lemma to bound them all.
\newblock In {\em 32nd Internat. Sympos. Graph Drawing Network Visualization(GD 2024)}, volume 320 of {\em LIPIcs}, pages 7:1--7:17, 2024.
\newblock \href {https://doi.org/10.4230/LIPIcs.GD.2024.7} {\path{doi:10.4230/LIPIcs.GD.2024.7}}.

\bibitem{prtt-iclfmcsg-04}
J{\'{a}}nos Pach, Rado{\v{s}} Radoi{\v{c}}i{\'{c}}, G{\'{a}}bor Tardos, and G{\'{e}}za T{\'{o}}th.
\newblock Improving the {C}rossing {L}emma by finding more crossings in sparse graphs.
\newblock In {\em Proc. 20th Annu. Sympos. Comput. Geom. ({SoCG}~2004)}, pages 68--75, 2004.
\newblock \href {https://doi.org/10.1145/997817.997831} {\path{doi:10.1145/997817.997831}}.

\bibitem{prtt-iclfmcsg-06}
J{\'{a}}nos Pach, Rado{\v{s}} Radoi{\v{c}}i{\'{c}}, G{\'{a}}bor Tardos, and G{\'{e}}za T{\'{o}}th.
\newblock Improving the {C}rossing {L}emma by finding more crossings in sparse graphs.
\newblock {\em Discrete Comput. Geom.}, 36(4):527--552, 2006.
\newblock \href {https://doi.org/10.1007/s00454-006-1264-9} {\path{doi:10.1007/s00454-006-1264-9}}.

\bibitem{ptt-cbne-20}
J{\'{a}}nos Pach, G{\'{a}}bor Tardos, and G{\'{e}}za T{\'{o}}th.
\newblock Crossings between non-homotopic edges.
\newblock In {\em Proc. 28th Internat. Sympos. Graph Drawing Network Visualization ({GD}~2020)}, volume 12590 of {\em LNCS}, pages 359--371, 2020.
\newblock \href {https://doi.org/10.1007/978-3-030-68766-3\_28} {\path{doi:10.1007/978-3-030-68766-3\_28}}.

\bibitem{ptt-cbne-22}
J{\'{a}}nos Pach, G{\'{a}}bor Tardos, and G{\'{e}}za T{\'{o}}th.
\newblock Crossings between non-homotopic edges.
\newblock {\em J. Comb. Theory {B}}, 156:389--404, 2022.
\newblock URL: \url{https://doi.org/10.1016/j.jctb.2022.05.007}, \href {https://doi.org/10.1016/J.JCTB.2022.05.007} {\path{doi:10.1016/J.JCTB.2022.05.007}}.

\bibitem{pt-gdfcpe-96}
J{\'{a}}nos Pach and G{\'{e}}za T{\'{o}}th.
\newblock Graphs drawn with few crossings per edge.
\newblock In {\em Proc. 4th Internat. Sympos. Graph Drawing ({GD}~1996)}, volume 1190 of {\em {LNCS}}, pages 345--354, 1996.
\newblock \href {https://doi.org/10.1007/3-540-62495-3\_59} {\path{doi:10.1007/3-540-62495-3\_59}}.

\bibitem{pt-gdfcpe-97}
J{\'{a}}nos Pach and G{\'{e}}za T{\'{o}}th.
\newblock Graphs drawn with few crossings per edge.
\newblock {\em Combinatorica}, 17(3):427--439, 1997.
\newblock \href {https://doi.org/10.1007/BF01215922} {\path{doi:10.1007/BF01215922}}.

\bibitem{Ringel65}
Gerhard Ringel.
\newblock Ein {S}echsfarbenproblem auf der {K}ugel.
\newblock {\em Abhandlungen aus dem Mathematischen Seminar der Universit\"at Hamburg}, 29:107--117, 1965.
\newblock \href {https://doi.org/10.1007/BF02996313} {\path{doi:10.1007/BF02996313}}.

\bibitem{suzuki2020beyondPlanar}
Yusuke Suzuki.
\newblock 1-planar graphs.
\newblock In {\em Beyond Planar Graphs, Communications of {NII} Shonan Meetings}, pages 47--68. Springer, 2020.
\newblock \href {https://doi.org/10.1007/978-981-15-6533-5\_4} {\path{doi:10.1007/978-981-15-6533-5\_4}}.

\end{thebibliography}

\appendix 

\section{Relating Crossing, Edge, Cell, Trail, and Configuration Counts}\label{app:constraints}

As explained above, we shall present a number of (in)equalities, each relating the number of certain cells, configurations, edges, or crossings.
We begin with three obvious equations.

\begin{observation}
    If~$\Gamma$ is a $3$-plane drawing, we have
    \begin{align}
        \tag{9.A} \label{eq:9A} \abs{E} &= \abs{E_{\times}} + \abs{E_0} \\
        \tag{8.A} \label{eq:8A} \abs{E_{\times}} &= \abs{E_1} + \abs{E_2} + \abs{E_3}  \\
        \tag{8.B} \label{eq:8B} 2\abs{X} &= \abs{E_1} + 2 \abs{E_2} + 3 \abs{E_3}  
    \end{align}
\end{observation}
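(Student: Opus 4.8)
The plan is to verify each of the three identities by a short counting argument straight from the definitions; all three are bookkeeping, so the work consists mainly of making the counts explicit and pointing out where $3$-planarity enters. For \eqref{eq:9A}, I would note that every edge either carries no crossing or at least one, so $E$ is the disjoint union of $E_0$ and the set of crossed edges; since $\Gamma$ is $3$-plane, every crossed edge has $1$, $2$, or $3$ crossings, that is, this set is exactly $E_{\times} = E_1 \cup E_2 \cup E_3$, and taking cardinalities gives \eqref{eq:9A}. For \eqref{eq:8A}, the sets $E_1, E_2, E_3$ are pairwise disjoint, since each edge has an \emph{exact} number of crossings and hence lies in exactly one of them; summing cardinalities over this disjoint union yields \eqref{eq:8A}.

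For \eqref{eq:8B}, I would count in two ways the set $I$ of incident pairs $(x,e)$ where $x \in X$ is a crossing lying on the edge $e \in E$. On the one hand, by the standing assumption that no three edges pass through a common point, every crossing is a proper crossing of exactly two edges, so $\abs{I} = 2\abs{X}$; this stays correct even when two edges cross several times, since each such crossing point is a separate element of $X$. On the other hand, grouping the pairs by their edge, an edge with exactly $i$ crossings contributes exactly $i$ pairs, and $3$-planarity gives $i \in \{0,1,2,3\}$, so $\abs{I} = \abs{E_1} + 2\abs{E_2} + 3\abs{E_3}$. Equating the two expressions for $\abs{I}$ yields \eqref{eq:8B}.

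I do not expect a genuine obstacle here. The only step warranting a moment of care is the incidence count for \eqref{eq:8B}: one must count incident crossing–edge pairs rather than, say, pairs of mutually crossing edges, so that multiple crossings between the same two edges are handled correctly, and $3$-planarity is used only to truncate the sum at $i = 3$ (without it, terms $i\,\abs{E_i}$ for $i \geq 4$ would appear, and the definition of $E_{\times}$ would no longer capture all crossed edges).
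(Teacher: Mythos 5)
Your argument is correct and is exactly the bookkeeping the paper leaves implicit: it states these as ``obvious equations'' with no written proof, and your partition of $E$ into $E_0, E_1, E_2, E_3$ together with the double count of crossing--edge incidences (using that each crossing involves exactly two edges) is the intended justification. No issues.
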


Configurations are arrangements of several cells and edge-segments in a drawing~$\Gamma$ of a graph~$G$.
Formally, a \emph{configuration} is a connected labeled embedded subgraph~$H$ of the planarization~$\Lambda$ of~$\Gamma$. 
The label of a vertex~$v \in V(H)$ indicates whether~$v$ arises from a vertex of~$G$ or a crossing.
Two configurations are of the same \emph{type} if they are isomorphic as labeled embedded graphs (after possibly reflecting one of them).
We also denote the types by small pictograms such as $\YTriTriTri$ and $\YTriTri$, see \cref{fig:configurations_trails} (left) for an example.

A cell~$c$ is \emph{interior} to a configuration~$C$ if all of its boundary~$\partial c$ is part of~$C$.
We color interior cells gray in the pictograms.
An edge-segment is \emph{interior} to~$C$ if both its incident cells are interior to~$C$.
For example, a $\YTriTri$-configuration~$C$ has an interior $\CVvII$-cell and an interior $\CIVvI$-cell, whose shared outer edge-segment is the only interior edge-segment of~$C$.

Now, trails are specific configurations.
Let~$T = (A_1, \dots, A_{\ell})$ be a sequence of $\ell \geq 2$ cells whose dual is a walk\footnote{in~$3$-plane drawings these are always paths} in the planarization of~$\Gamma$.
The configuration~$T$ is a \emph{trail} if 
\textbf{(1)} neither $A_1$ nor $A_{\ell}$ is a \CIVvO-cell, 
\textbf{(2)} for each $i \in [\ell-1]$ cells $A_i$ and $A_{i+1}$ share an inner edge-segment (that is interior to~$T$), and
\textbf{(3)} each of $A_2,\ldots,A_{\ell-1}$ is a \CIVvO-cell $c$ whose two interior edge-segments are opposite on its boundary~$\partial c$.
See \cref{fig:configurations_trails} (right) for an example.
Note that only two of the four inner edge-segments of each \CIVvO-cell of~$T$ are interior to~$T$ (as no edge self-intersects).
Every trail~$T$ has two \emph{bounding edges}~$e_1$ and~$e_2$, which are those crossing all interior edge-segments of~$T$. 
An $A$-$B$-trail is a trail whose endpoints are cells of type~$A$ and~$B$ respectively. 
We denote by $\cor{A}{B}$ the number of $A$-$B$-trails in~$\Gamma$.
Note in particular that $\cor{A}{B} = \cor{B}{A}$.

Recall that a $3$-plane, non-homotopic, connected, filled drawing~$\Gamma$ of a graph on at least three vertices is called \emph{$3$-saturated}.
In every $3$-saturated drawing, all cells of size at most five are non-degenerate and of one of the following types:
$\CIIIvO$, $\CIVvO$, $\CIVvI$, $\CVvO$, $\CVvI$, and $\CVvII$; cf.~\cref{fig:cell-characterization}.

\begin{lemma}
\label{lem:augmentation_to_3-saturated-drawing}
    Every $3$-plane drawing~$\Gamma$ of a connected graph~$G$ on at least three vertices can be completed to a $3$-saturated drawing by only adding edges.
\end{lemma}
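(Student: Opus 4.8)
The plan is to reach a $3$-saturated drawing by repeatedly adding uncrossed edges until the drawing is \emph{filled}. We may assume that $\Gamma$ is non-homotopic, since adding edges can never destroy an empty lens. Moreover, as $G$ is connected, the planarization of $\Gamma$ is connected (each edge of $G$ is a connected path of edge-segments in the planarization), and this stays true when adding edges; so throughout the process only the \emph{filled} and \emph{non-homotopic} properties need care.

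\emph{The saturation step.} As long as the current drawing is not filled, there is a cell $c$ carrying two vertices $u\neq v$ on $\partial c$ that are not joined by an uncrossed edge along $\partial c$. Since the planarization is connected, $c$ is an open disk, so we may draw a new edge $g=uv$ as a Jordan arc through the interior of $c$. It crosses nothing (a cell interior meets no edge), so the drawing stays $3$-plane and connected, and $g$ splits $c$ into two cells. The crucial point is that $g$ creates no empty lens. Because $g$ is uncrossed, any lens that has a part of $g$ on its boundary in fact has all of $g$ on its boundary, with its two corners at $u$ and $v$; hence it is bounded by $g$ together with either one of the two boundary arcs of $c$ between $u$ and $v$, or (a sub-arc of) some parallel edge $h=uv$. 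I claim that each boundary arc of $c$ between $u$ and $v$ consists of at least two parts of edges: it is not a single uncrossed edge, since that edge would join $u$ and $v$ along $\partial c$, contrary to the choice of $c,u,v$; and it is not a sub-arc of a single crossed edge $h'$ either, since the two segments of $h'$ meeting at any crossing of $h'$ are opposite there and hence never occur consecutively on a cell boundary (using that no three edges cross in one point). In particular, such an arc contains a crossing or a vertex in its relative interior. The two cases now follow: a region bounded by $g$ and a boundary arc of $c$ is bounded by at least three parts of edges and so is not a lens at all; and for a region $R$ bounded by $g$ and a parallel edge $h$, one uses that $h$ cannot run through the interior of $c$ (which is empty), together with the fact that inside $c$ the arc $g$ separates the two boundary arcs of $c$, to conclude that $R$ contains an entire boundary arc of $c$ and hence a crossing or a vertex in its interior, so $R$ is nonempty. (A small sub-case analysis, e.g.\ when $h$ is crossed and partly on $\partial c$, reduces to these situations.)

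\emph{Termination and conclusion.} Each step adds an edge and leaves the number $|X|$ of crossings unchanged; since the drawing remains non-homotopic, no two uncrossed edges between the same pair of vertices are homotopic, so the number of uncrossed edges that can be present is bounded in terms of $n$ and $|X|$ (alternatively, one may cite the known linear bound on the edge count of a non-homotopic $3$-plane drawing). Hence the process terminates, and it does so precisely when the drawing is filled. The resulting drawing is $3$-plane, non-homotopic, connected, filled, and on $n\geq 3$ vertices, i.e.\ $3$-saturated, and it was obtained from $\Gamma$ by adding edges only.

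I expect the main obstacle to be the verification that the saturation step preserves the non-homotopic property; its heart is the topological fact that a crossed edge cannot form an entire boundary arc of a cell, which is exactly what keeps the new edge $g$ from slicing an empty lens off of the cell $c$. Once this is in place, termination and the remaining bookkeeping are routine.
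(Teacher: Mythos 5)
Your proposal is correct and follows essentially the same route as the paper: repeatedly insert an uncrossed edge $uv$ inside a cell $c$ witnessing a violation of the filled property, and observe that no empty lens is created because no edge $uv$ lies along $\partial c$. You supply more detail than the paper on why the new regions are not empty lenses and on termination, but the underlying argument is the same.
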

\begin{proof}
    Note that the drawing~$\Gamma$ is connected as~$G$ is connected.
    We therefore only need to enforce the filled property, while preserving all other properties.
    Suppose there is a cell~$c$ with vertices~$u$ and $v$ on its boundary, but no edge~$uv$ lies on~$\partial c$. 
    We can insert an edge~$uv$ within the cell~$c$, without creating new crossings. 
    The edge might be a parallel edge. 
    Yet, as no edge connecting $u$ and~$v$ lies on the boundary~$\partial c$, we did not create an empty lens.
    Therefore, the drawing is still non-homotopic.
    Neither does this create self-crossings or incident crossing edges.
    Inductively, the claim follows.
\end{proof}

\subsection{Lower Bounds on the Number of Cells and Configurations}

\begin{lemma}
    If~$\Gamma$ is a $3$-saturated drawing, then 
    \begin{align}
        \tag{2.A} \label{eq:2A} \CIVvI &= \cor{\CIVvI}{\CVvI} + \cor{\CIVvI}{\CVvO} + \cor{\CIVvI}{\Clarge} \\
        \tag{2.B} \label{eq:2B} 2\mul \CVvI &= \cor{\CIVvI}{\CVvI} + 2\cor{\CVvI}{\CVvI} + \cor{\CVvI}{\CIIIvO} + \cor{\CVvI}{\CVvO} + \cor{\CVvI}{\Clarge} \\
        \tag{2.C} \label{eq:2C} 3\mul\CIIIvO &= \cor{\CIIIvO}{\CVvI} + \cor{\CIIIvO}{\CVvO} + \cor{\CIIIvO}{\Clarge} \\
        \tag{2.D} \label{eq:2D} 5\mul\CVvO &= \cor{\CIIIvO}{\CVvO} + \cor{\CVvO}{\CIVvI} + \cor{\CVvO}{\CVvI} + 2\cor{\CVvO}{\CVvO} + \cor{\CVvO}{\Clarge}
    \end{align}
\end{lemma}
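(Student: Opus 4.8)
The plan is to prove \eqref{eq:2A}--\eqref{eq:2D} by double counting incidences between cells and the trails that they are endpoints of. The basic remark is that a cell $c$ which is not a $\CIVvO$-cell is the endpoint of exactly as many trails as it has inner edge-segments on its boundary, and distinct inner edge-segments give distinct trails. Indeed, if $s \subseteq \partial c$ is an inner edge-segment, then by \cref{obs:one_corridor_per_inner_segment} it is interior to a unique trail $T_s$; since $s$ is shared by $c$ and a neighbouring cell of $T_s$ and $c$ is not a $\CIVvO$-cell, the cell $c$ cannot be an interior cell of $T_s$, hence it is one of its two endpoints; conversely every trail with endpoint $c$ arises as $T_s$ for the unique inner edge-segment $s$ of that trail incident to $c$.

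First I would read off from the list of non-degenerate cells of size at most five (\cref{fig:cell-characterization}) the number of inner edge-segments of the relevant types: a $\CIIIvO$-cell has $3$, a $\CIVvI$-cell has $1$, a $\CVvI$-cell has $2$, and a $\CVvO$-cell has $5$; a $\CVvII$-cell has none and so lies in no trail, while the interior cells of a trail are exactly the $\CIVvO$-cells. Summing the remark above over all cells of a fixed type $\tau \in \{\CIIIvO,\CIVvI,\CVvI,\CVvO\}$ and regrouping the resulting sum by trails, the left-hand side becomes $k_\tau$ times the number of cells of type $\tau$, where $k_\tau$ equals $3$, $1$, $2$, $5$ for $\tau$ equal to $\CIIIvO$, $\CIVvI$, $\CVvI$, $\CVvO$ respectively, and the right-hand side becomes $\sum_{\tau'}\cor{\tau}{\tau'}$ over the possible endpoint types $\tau'\in\{\CIIIvO,\CIVvI,\CVvI,\CVvO,\Clarge\}$, with $\cor{\tau}{\tau}$ counted with coefficient $2$ (a trail with both endpoints of type $\tau$ contributes twice). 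For $\tau=\CVvI$ this is precisely \eqref{eq:2B}, and for $\tau=\CVvO$ it is precisely \eqref{eq:2D}. For $\tau=\CIVvI$ and $\tau=\CIIIvO$, comparing with \eqref{eq:2A} and \eqref{eq:2C}, it remains to prove that $\cor{\CIVvI}{\CIVvI}=0$, $\cor{\CIVvI}{\CIIIvO}=0$ and $\cor{\CIIIvO}{\CIIIvO}=0$.

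This last step is the main obstacle, and it is where the $3$-saturated (in particular, non-homotopic) hypothesis is used. The idea is that a trail has a fixed pair $e_1,e_2$ of bounding edges, and that the endpoints of type $\CIVvI$ or $\CIIIvO$ are exactly those at which $e_1$ and $e_2$ meet: at a $\CIVvI$-endpoint they are the two edges carrying the boundary segments incident to that cell's vertex, so they meet at that vertex; at a $\CIIIvO$-endpoint they are two of the three pairwise-crossing edges bounding the triangular cell, so they meet at their crossing. Hence, if both endpoints of a trail are of type $\CIVvI$ or $\CIIIvO$, then an arc of $e_1$ together with an arc of $e_2$ forms a closed curve bounding a region whose interior meets the drawing only in the cells and interior edge-segments of the trail, and therefore contains no vertex and no crossing in its interior; this makes it an empty lens, contradicting non-homotopy. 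A short extra argument is needed for the degenerate subcase of a $\CIIIvO$-$\CIIIvO$-trail in which the two relevant crossings coincide: a local analysis of the two triangular cells then forces either a second crossing of $e_1$ and $e_2$ or a vertex on the boundary of a $\CIIIvO$-cell, both impossible; I expect this subcase to require the most care. With these three trail counts equal to $0$, \eqref{eq:2A} and \eqref{eq:2C} follow.
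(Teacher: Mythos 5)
Your proposal is correct and follows essentially the same route as the paper: it counts trail--cell incidences via inner edge-segments using \cref{obs:one_corridor_per_inner_segment}, and then rules out the endpoint pairs $\cor{\CIVvI}{\CIVvI}$, $\cor{\CIVvI}{\CIIIvO}$, and $\cor{\CIIIvO}{\CIIIvO}$ exactly as needed (the paper excludes the first two via the non-homotopic condition and the no-adjacent-crossings assumption, while you use a uniform empty-lens argument, which also works; your worrisome coincident-crossings subcase is in fact vacuous, since a nontrivial sub-arc of a self-intersection-free edge cannot have equal endpoints). The paper only writes out \eqref{eq:2A} and declares the rest analogous, so your write-up is, if anything, more complete.
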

\begin{proof}
    We only prove the first equality. 
    The remaining equations can be shown in a similar way.
    Consider a \CIVvI-cell~$c$. 
    By \cref{obs:one_corridor_per_inner_segment}, the inner edge-segment on the boundary of~$c$ is interior to a trail starting in~$c$. 
    Let~$c'$ be the cell corresponding to the other end of the trail. 
    As parallel edges do not form an empty lens, $c'$ cannot be a \CIVvI-cell.
    Further, no two adjacent edges cross.
    Thus, $c'$ cannot be a \CIIIvO-cell.
    Recall that no end of a trail corresponds to a \CIVvO-cell, yet the cell~$c'$ has to be incident to an inner edge-segment. 
    Thus, $c'$ is a \CVvI-, a \CVvO- or a \Clarge-cell (i.e. a cell of size at least~$6$).
    As every \CIVvI-cell is only incident to one inner edge-segment, we obtain one trail for each such cell.
    Double-counting yields \eqref{eq:2A}.
\end{proof}

\begin{lemma}
    If~$\Gamma$ is a $3$-saturated drawing, then
    \begin{align}
    \tag{3.A}
    \label{eq:3A}
    \cor{\CIIIvO}{\CVvO} \leq \YTriQuad.
    \end{align}
\end{lemma}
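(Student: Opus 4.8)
The plan is to mimic the argument for \eqref{eq:3C}, with the extra care needed because a $\CIIIvO$-$\CVvO$-trail can in principle contain interior $\CIVvO$-cells. First I would fix a $\CIIIvO$-$\CVvO$-trail $T$ with endpoints $c_A$ (of type $\CIIIvO$) and $c_B$ (of type $\CVvO$), and the two bounding edges $e_1,e_2$ of $T$. As in \eqref{eq:3C} the goal is to exhibit a $\YTriQuad$-configuration canonically associated with $T$, so that distinct trails give distinct configurations and \eqref{eq:3A} follows by this injection.

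The key structural step is to identify, near the $\CVvO$-endpoint $c_B$, two vertices on a common cell boundary whose connecting (uncrossed, by saturation) edge closes off a quadrilateral cell. Concretely: the $\CVvO$-cell $c_B$ has five edge-segment incidences along $\partial c_B$, all of them outer except for the one inner segment shared with the previous cell of $T$ (which is crossed by both $e_1$ and $e_2$). Walking along $\partial c_B$ from that inner segment, one reaches a vertex $u$ on one side and a vertex $v$ on the other; since $\Gamma$ is $3$-saturated, the edge $uv$ is present and drawn uncrossed along $\partial c_B$. I would then argue that $uv$ together with (portions of) $e_1$, $e_2$ and the inner segment of $c_B$ bounds a $\CIVvO$-cell, and that this cell, glued to $T$, forms the desired $\YTriQuad$. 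The three-crossing budget is what makes this work: each bounding edge is crossed at most three times, and since it already crosses every interior segment of $T$, the portion of $e_i$ adjacent to $c_B$ cannot acquire further crossings that would spoil the quadrilateral — this bounding of crossings is the place where $3$-planarity is essential and where the $\CIVvO$-cell count is forced.

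The main obstacle I anticipate is ruling out that a would-be $\YTriQuad$-configuration is actually attached to \emph{two} different $\CIIIvO$-$\CVvO$-trails, which would break injectivity; and, dually, confirming that the cell I claim is a $\CIVvO$-cell really has size exactly four and is non-degenerate (a priori the two vertices $u,v$ could coincide, or the region could wrap around and pick up extra incidences). I would handle the first point by observing that the $\YTriQuad$ has a unique inner-segment-path realizing a trail, hence recovers $T$ uniquely; for the second I would invoke that in a $3$-saturated drawing all small cells are non-degenerate and of the listed types, so once I bound the size of the region from above by $4$ via the crossing budget on $e_1,e_2$, it must be a genuine $\CIVvO$-cell. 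Assembling these observations gives the injection $T \mapsto \YTriQuad(T)$ and therefore $\cor{\CIIIvO}{\CVvO} \leq \YTriQuad$.
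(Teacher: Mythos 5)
Your overall strategy (an injection from \CIIIvO-\CVvO-trails to \YTriQuad-configurations, using the three-crossing budget on the bounding edges and saturation to force an uncrossed edge between two vertices on a common cell) matches the paper's, but the concrete construction has genuine errors. First, your description of the \CVvO-endpoint is wrong: a \CVvO-cell has \emph{no} vertices on its boundary, so all five of its boundary edge-segments are inner (this is precisely why, by \eqref{eq:2D}, every \CVvO-cell is the endpoint of five trails). Walking along $\partial c_B$ therefore never reaches a vertex, and the vertices $u,v$ you need cannot be found there. Second, you are building the configuration at the wrong end of the trail. The paper constructs it at the \CIIIvO-end: the two bounding edges cross \emph{each other} on the boundary of the \CIIIvO-cell, and since each is crossed exactly three times overall (once by the other bounding edge, once by the edge carrying the shared inner segment, once by its neighbour in the \CVvO-cell), their outer segments beyond that mutual crossing end at vertices $u,v$ lying on a common cell; saturation then yields the edge $uv$ and a \CVvII-cell, and a second application of the same argument to an endpoint $w$ of the third edge of the \CIIIvO-cell yields an adjacent \CVIvII-cell; these two cells together form the \YTriQuad. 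At the \CVvO-end the endpoints of the two bounding edges are separated by the other pentagon edges and do not lie on a common cell, so saturation gives you nothing there.

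Third, the cell you claim to obtain cannot exist as described: a region with the two vertices $u,v$ on its boundary has at least two vertex incidences and hence is not a \CIVvO-cell (which by definition has none); the cells actually appearing in the \YTriQuad-configuration are a \CVvII- and a \CVIvII-cell. Finally, you should separate cleanly the claim that the trail contains no interior \CIVvO-cell (which follows because the two end-cells alone already account for three crossings on each bounding edge) from the construction of the configuration; in your write-up these two uses of the crossing budget are conflated. The injectivity step you sketch (recovering $T$ from its bounding edges) is the right idea and matches the paper's, but it cannot be completed until the configuration itself is correctly identified.
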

\begin{proof}
    Consider a \CIIIvO-\CVvO-trail~$T$, see \cref{fig:3A}. 
    We use the same notation as in \cref{fig:3A}.
    Note that~$T$ contains no \CIVvO-cell as each of its bounding edges~$u'u$ and~$v'v$ is already crossed three times.
    For the same reason, one of the endpoints of $uu'$ and $vv'$ lies on the boundary of~$c$ and~$c'$, respectively, i.e. $u \in \partial c$ and $v \in \partial c'$.
    As the edge~$e$ is crossed at least twice, an endpoint~$w$ of~$e$ is incident to~$c$ or~$c'$. 
    We may assume without loss of generality that~$w$ lies on~$\partial c'$. 
    Observe that~$w$ and~$v$ are both incident to~$c'$.
    Since the drawing is $3$-saturated, $vw \in E(G)$ and $c'$ is a $\CVIvII$-cell.
    Similarly, we see that $uv \in E(G)$ and the cell~$c''$ is a \CVvII-cell.
    Thus, the cells~$c'$ and~$c''$ form a \YTriQuad-configuration~$C$.
    
    We now observe that no other \CIIIvO-\CVvO-trail is adjacent to~$C$.
    The two outer segments of the \CVvII-cell in~$C$ are part of the two bounding edges of the \CIIIvO-\CVvO-trail~$T$, see \cref{fig:3A} (right).
    As no two \CIIIvO-\CVvO-trails have the same bounding edges, each is adjacent to a different \YTriQuad-configuration and the inequality above follows.
    \begin{figure}[bt]
        \centering
        \includegraphics[page=1]{figures/adjacencies_two_images.pdf}
        \caption{A $\CIIIvO$-$\CVvO$-trail (light blue) and its boundary (thick) is represented on the left. Such a trail is always adjacent to a $\YTriQuad$ (dark blue). The adjacency is represented on the right.}
        \label{fig:3A}
    \end{figure}
\end{proof}

\begin{lemma}
    If~$\Gamma$ is a $3$-saturated drawing, then
    \begin{align}
    \tag{3.B}
    \label{eq:3B}
    \cor{\CVvO}{\CVvI} \leq \YQuintQuadTri.
    \end{align}
\end{lemma}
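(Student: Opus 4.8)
The plan is to follow the pattern of the proofs of~\eqref{eq:3A} and~\eqref{eq:3C}: first understand the rigid structure of a $\CVvO$-$\CVvI$-trail, then read off a nearby $\CVvII$-cell using that $\Gamma$ is $3$-saturated, recognize the resulting arrangement as a $\YQuintQuadTri$-configuration, and finally observe that distinct trails give distinct configurations.

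First I would determine the shape of a $\CVvO$-$\CVvI$-trail $T=(A_1,\dots,A_\ell)$ with $A_1$ the $\CVvO$-cell and $A_\ell$ the $\CVvI$-cell. In $A_\ell$, exactly two of the four edge-segments are outer, namely the two incident to its unique vertex~$v$, and of the two segments flanking the rung that enters $A_\ell$ exactly one is outer; hence one bounding edge, say $e_1$, ends at~$v$, while the other bounding edge $e_2$ has an inner segment on $\partial A_\ell$ and therefore is crossed at the corner of $A_\ell$ off the rung. Since $A_1$ is vertex-free, $e_2$ is likewise crossed at a corner of $A_1$ off the rung. Counting these two crossings together with one crossing per rung shows that $e_2$ has at least $\ell+1$ crossings, so $\ell=2$ and $T$ contains no $\CIVvO$-cell. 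I expect the delicate point here to be verifying that these crossings are genuinely distinct, i.e.\ excluding the degenerate situations where an edge appears twice on the boundary of $A_1$ or of $A_\ell$; this should follow from the non-homotopic property together with the crossing budget of the edges involved.

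So $T=(A_1,A_2)$, both bounding edges are crossed exactly three times, $e_1$ ends at~$v$, and, tracing $e_2$ through $A_2$, it ends at a vertex~$b$ right after crossing the edge $h'$ whose segment on $\partial A_2$ lies opposite the rung; as that segment of $h'$ is outer, $h'$ also ends at~$v$. I would then consider the cell $c_2$ on the far side of this $h'$-segment from $A_2$. Walking along $\partial c_2$ we meet $v$, the $h'$-segment, the crossing $e_2\cap h'$, the outer segment of $e_2$ leading to~$b$, and~$b$; since $\Gamma$ is $3$-saturated the edge $vb$ bounds $c_2$, so $c_2$ is a $\CVvII$-cell, and $T$ together with $c_2$ is a $\YQuintQuadTri$-configuration. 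Because $c_2$ is the cell across the edge-segment of the $\CVvI$-cell opposite to the rung, it is determined by $T$, so $T\mapsto T\cup c_2$ is well defined; conversely the $\CVvO$- and $\CVvI$-cells of a $\YQuintQuadTri$-configuration are distinguished by size and boundary and recover $T$. Hence the map is injective and $\cor{\CVvO}{\CVvI}\le\YQuintQuadTri$. The main obstacle, as noted, is the structural step that forbids a $\CIVvO$-cell inside the trail even in the degenerate cases.
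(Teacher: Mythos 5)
Your argument is correct and matches the paper's proof in all essentials: the paper likewise rules out interior $\CIVvO$-cells via the crossing budget of the bounding edge that has an inner segment on the $\CVvI$-cell, locates the endpoint of that thrice-crossed edge on the cell across the outer segment opposite the rung, and invokes $3$-saturation to turn that cell into a $\CVvII$-cell completing a $\YQuintQuadTri$-configuration. The only (harmless) overstatement is that \emph{both} bounding edges are crossed exactly three times --- the bounding edge ending at~$v$ is only forced to have two crossings --- but you never use this claim.
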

\begin{proof}
    Consider a \CVvO-\CVvI-trail.
    As the drawing is $3$-plane, the trail contains no \CIVvO-cell, and we are in the situation represented in \cref{fig:3B}. 
    The edge~$e$ (represented by a thick line) is crossed three times.
    Thus, one of its endpoints lies on the boundary of the cell~$c$. 
    Let~$u$ denote this vertex.
    Note that the vertex~$v$, which is the only vertex on the \CVvI-cell, is also incident to~$c$.
    As the drawing is $3$-saturated, $uv$ is an edge and $c$ is a \CVvII-cell.
    The \CVvO-\CVvI-trail together with~$c$ forms a \YQuintQuadTri.
    For every \CVvO-\CVvI-trail, we obtain a distinct \YQuintQuadTri (which may share a \CVvII-cell with another such configuration) and the inequality follows. 
    \begin{figure}[tb]
        \centering
        \includegraphics[page=2]{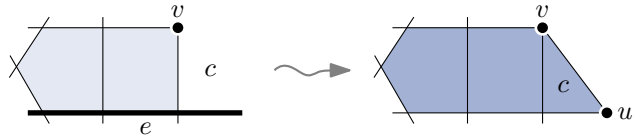}
        \caption{A \CVvO-\CVvI-trail (light blue). The bounding edge~$e$ (thick) is crossed three times. Such a trail forms a \YQuintQuadTri with an adjacent cell. The \YQuintQuadTri (dark blue) is represented on the right.}
        \label{fig:3B}
    \end{figure}
\end{proof}

\begin{lemma}
     If~$\Gamma$ is a $3$-saturated drawing, then
    \begin{align}
    \tag{3.D}
    \label{eq:3D}
    \CIVvI \leq \YTriTri + \YTriTriTri.
    \end{align}
\end{lemma}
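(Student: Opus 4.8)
The plan is to bound the number of $\CIVvI$-cells by charging each one to a $\YTriTri$- or $\YTriTriTri$-configuration that contains it. Recall that a $\CIVvI$-cell $c$ has exactly one inner edge-segment on its boundary, together with one outer edge-segment and one vertex $v$; the two crossings on $\partial c$ come from a single crossed edge $e$ (the bounding edge of the trail through $c$) which is crossed at least twice. First I would look at the cell $c'$ on the other side of the inner edge-segment of $c$. Since the drawing is $3$-saturated and $e$ is already crossed at least twice, the inner segment of $c$ lies on an edge-segment that leaves only limited room; I would argue (as in the proofs of $\eqref{eq:3A}$ and $\eqref{eq:3B}$) that $c'$ is either a $\CIVvI$-cell or a $\CIIIvO$-cell, because a vertex of the crossed edges bounding $c$ must reappear on $\partial c'$, and the $3$-saturated condition forces an uncrossed edge there, collapsing $c'$ to size $4$ or $3$. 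These two cases give exactly the two configurations: two $\CIVvI$-cells sharing their inner segment form a $\YTriTri$, and a $\CIVvI$-cell together with an adjacent $\CIIIvO$-cell forms a $\YTriTriTri$.

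Next I would make the charging precise. Each $\CIVvI$-cell $c$ has a unique inner edge-segment, hence a unique neighbor $c'$ across it, hence a unique configuration $C(c) \in \{\YTriTri, \YTriTriTri\}$ determined by $c$. To get the inequality $\CIVvI \le \YTriTri + \YTriTriTri$ it suffices to show that each such configuration is charged at most once in the $\YTriTriTri$ case and at most twice in the $\YTriTri$ case, matching the multiplicities with which the $\CIVvI$-cells sit inside them. A $\YTriTri$-configuration contains two $\CIVvI$-cells (here I am reading off the pictogram: it is the configuration with an interior $\CVvII$-cell flanked appropriately — wait, I need to be careful, so let me instead say: I would read off from \cref{fig:configurations_trails} exactly how many $\CIVvI$-cells each configuration contains and check that the map $c \mapsto C(c)$ is at most that-many-to-one onto configurations of that type). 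A $\YTriTriTri$-configuration contains exactly one $\CIVvI$-cell, and the shared inner segment between the $\CIVvI$- and $\CIIIvO$-cell is determined by the configuration, so the map is injective onto $\YTriTriTri$-configurations; similarly each $\YTriTri$ accounts for its two $\CIVvI$-cells. Summing gives the bound.

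The main obstacle I expect is the case analysis establishing that the neighbor $c'$ across the inner segment of a $\CIVvI$-cell can only be a $\CIVvI$- or $\CIIIvO$-cell, and in particular ruling out the possibility that $c'$ is a $\CVvO$-, $\CVvI$-, or large cell. The point is that if $c$ is a $\CIVvI$-cell then its \emph{bounding edge} $e$ (the crossed edge whose two crossings appear on $\partial c$) has only one crossing budget left, forcing the configuration on the other side to be tightly constrained; but I must also track the second edge incident to those crossings and argue its endpoint lands on $\partial c'$ and that $3$-saturation then shrinks $c'$. Care is needed because a priori the trail through $c$ could continue through $\CIVvO$-cells, but since $e$ is crossed at most three times and already crossed twice at $c$, the trail has length two and $c'$ is an immediate neighbor — this is where $3$-planarity is essential. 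A secondary subtlety is correctly counting multiplicities of $\CIVvI$-cells inside each configuration type from the figures, and confirming that distinct $\CIVvI$-cells inside the same configuration are charged to that single configuration, so no overcounting occurs.
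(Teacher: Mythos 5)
Your high-level plan (charge each $\CIVvI$-cell to a configuration containing it and control multiplicities) matches the paper, but the geometric core of your argument is wrong in a way that cannot be repaired along the lines you propose. You look at the cell $c'$ across the \emph{inner} edge-segment of the $\CIVvI$-cell $c$ and claim $c'$ must be a $\CIVvI$- or $\CIIIvO$-cell. In a $3$-saturated drawing these are precisely the two types that are \emph{impossible} there: two $\CIVvI$-cells sharing their inner segment would force two parallel edges bounding an empty lens (contradicting non-homotopic), and a $\CIIIvO$-cell at the other end would force two adjacent edges to cross. This is exactly the content of \eqref{eq:2A}, which says the trail leaving a $\CIVvI$-cell ends in a $\CVvI$-, $\CVvO$-, or large cell --- the very cases you hoped to rule out. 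Relatedly, you misidentify the target configurations: a $\YTriTri$-configuration is a $\CVvII$-cell and a $\CIVvI$-cell sharing an \emph{outer} edge-segment (and a $\YTriTriTri$ is a $\CIVvI$-cell flanked by two $\CVvII$-cells), not two $\CIVvI$-cells glued along their inner segment. Your claim that the trail through $c$ has length two is also unjustified: the continuation of the trail is governed by the two \emph{bounding} edges crossing the inner segment, not by the edge $e$ containing it, so the trail may well pass through $\CIVvO$-cells.

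The paper's proof instead works across the two \emph{outer} edge-segments of $c$ (note a $\CIVvI$-cell has two of these, not one as you state). The edge $e$ carrying the inner segment of $c$ already has two crossings on $\partial c$, so at least one of its two remaining pieces is an outer segment ending at an endpoint $u$ of $e$; that $u$ then lies on the boundary of one of the two cells $c'',c'$ sharing an outer segment with $c$, together with the vertex $v$ of $c$. Saturation forces the edge $uv$ on that boundary, collapsing the cell to a $\CVvII$-cell and producing a $\YTriTri$ (if $e$ has a third crossing) or, repeating the argument on the other side, a $\YTriTriTri$ (if $e$ is crossed exactly twice). Since each such configuration contains exactly one $\CIVvI$-cell, the charging is injective and the bound follows.
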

\begin{proof}
    We show that each \CIVvI-cell is contained in a \YTriTri- or a \YTriTriTri-configuration.
    Consider the inner edge-segment of a \CIVvI-cell~$c$ and let~$e$ be the corresponding edge.
    We denote the two cells sharing an outer edge-segment with~$c$ by $c'$ and~$c''$.
    As the edge~$e$ is crossed at most three times, an endpoint~$v$ of~$e$ lies on the boundary of~$c'$ or~$c''$. 
    Without loss of generality, we are therefore in the situation represented in \cref{fig:3D}.
    The two vertices~$u$ and~$v$ lie on the boundary of the cell~$c''$.
    As~$\Gamma$ is $3$-saturated, $uv$ is an edge of~$G$ and~$c''$ is a \CVvII-cell. 
    
    If~$e$ is crossed three times, the three cells $c', c$ and $c''$ are in \YTriTri-configuration.
    Otherwise, $e$ is crossed twice and the same argument as above shows that $c'$ is also a \CVvII-cell. 
    In particular, the cells $c', c$ and $c''$ are in \YTriTriTri-configuration.

    As every \CIVvI-cell is only contained in one such configuration, the upper bound on the number of \CIVvI-cells follows.
    \begin{figure}[tb]
        \centering
        \includegraphics[page=4]{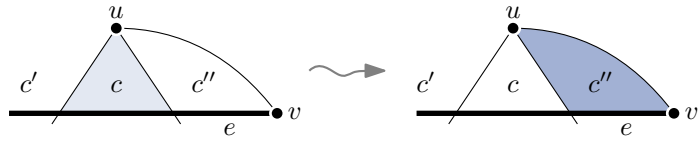}
        \caption{A \CIVvI-cell~$c$ (light blue). 
        One of the two cells sharing an outer edge-segment with~$c$ is a \CVvII-cell (dark blue).}
        \label{fig:3D}
    \end{figure}
\end{proof}

\begin{lemma}
    If~$\Gamma$ is a $3$-saturated drawing, then
    \begin{align}
    \tag{3.E}
    \label{eq:3E}
    \cor{\CIVvI}{\CVvI} \leq \frac{1}{2} \abs{E_1} + \YTriQuadQuad.
    \end{align}
\end{lemma}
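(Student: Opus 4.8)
The plan is to fix a $\CIVvI$-$\CVvI$-trail $T$ and read off its structure from its two bounding edges. The $\CIVvI$-cell $c$ of $T$ is a triangle with a single vertex, say $u$, and two crossings $p,q$; its unique inner edge-segment lies on an edge $e\in E(G)$, and its two outer edge-segments lie on the two edges $f,g$ that cross $e$ at $p$ and at $q$, respectively. Since the inner segment $pq$ is crossed only at its endpoints, $f$ and $g$ are exactly the bounding edges of $T$, and both are incident to $u$ (each carries an edge-segment of $\partial c$ ending at the vertex $u$). As $\Gamma$ is $3$-plane and each of $f,g$ crosses every interior edge-segment of $T$, the trail has at most four cells; in particular, either $T$ has an interior $\CIVvO$-cell, or $T=(c,c^{\ast})$ has length two.

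First I would handle the case that $T$ contains a $\CIVvO$-cell: here I would show that $T$ realizes a $\YTriQuadQuad$-configuration (ruling out, or splitting off, the length-four subcase), and that distinct such trails give distinct configurations, so that they are accounted for by the term $\YTriQuadQuad$. In the remaining case $T=(c,c^{\ast})$ is glued along the inner segment $pq$, and the $\CVvI$-cell $c^{\ast}$ is a quadrilateral with a single vertex $v$. Since $pq$ is one of the two inner segments of $c^{\ast}$, exactly one of $p,q$ --- say $p$ --- is a corner of $c^{\ast}$ incident both to $pq$ and to an \emph{outer} segment of $c^{\ast}$; the local picture at $p$ forces that outer segment to be a part of $f$, so $f$ is the edge $uv$ and is crossed only at $p$. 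Hence $f\in E_1$, and I charge $T$ to $f$.

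It then remains to bound, for each $h\in E_1$, the number of length-two $\CIVvI$-$\CVvI$-trails charged to $h$: as $h$ has a unique crossing, such a trail is determined by which of the two edge-segments of $h$ at that crossing the $\CVvI$-cell lies on (together with which of the two cells incident to the relevant inner segment is the $\CIVvI$-cell). The delicate point --- and the main obstacle --- is to organize this so that the coefficient comes out to $\tfrac12\abs{E_1}$ rather than a larger multiple of $\abs{E_1}$; I expect this to use the second outer segment of $c^{\ast}$ at $v$ to produce a second edge of $E_1$ (or else a $\YTriQuadQuad$-configuration), giving a two-to-one assignment of length-two trails to edges of $E_1$, and to use non-homotopy to exclude the degenerate alignments that arise because $f=uv$ (and possibly a parallel edge $g=uv$) can bound several cells.
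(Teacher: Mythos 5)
Your structural analysis reproduces the paper's proof almost step for step: the case split on whether the trail contains a $\CIVvO$-cell, the claim that the $\CIVvO$-case yields a $\YTriQuadQuad$-configuration (distinct for distinct trails), and the derivation that a length-two trail has a bounding edge $f=uv$ with exactly one crossing, hence in $E_1$. Indeed, your identification of which bounding edge lands in $E_1$ is more detailed than the paper's. A minor point first: in the $\CIVvO$-case you leave open whether the trail could contain two $\CIVvO$-cells (your ``length-four subcase''); the paper excludes this via $3$-planarity (each interior $\CIVvO$-cell costs each bounding edge one crossing, on top of the crossing at the $\CIVvI$-end and the additional crossing forced at the $\CVvI$-end, where one bounding edge acquires an inner segment), and without this exclusion the claim that the trail ``realizes a $\YTriQuadQuad$'' is not yet justified.

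The decisive issue is the one you flag yourself as ``the main obstacle'': the final counting step is precisely where the constant $\tfrac12$ is established, and your proposal does not supply it. Your charging scheme---each length-two trail to its unique bounding edge in $E_1$, each such edge receiving at most two trails (one per edge-segment of the crossing edge at its unique crossing)---only yields $\cor{\CIVvI}{\CVvI}\leq 2\abs{E_1}+\YTriQuadQuad$, a factor of four weaker than the statement. The paper closes this step by appealing to the fact that every edge is a bounding edge of at most two trails; you instead conjecture a repair via ``the second outer segment of $c^{\ast}$ at $v$ producing a second edge of $E_1$,'' but that segment lies on an edge which is crossed at the far corner of the $\CVvI$-cell and may be crossed again, so it need not lie in $E_1$. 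Since the coefficient $\tfrac12\abs{E_1}$ is the entire content of the inequality beyond the routine structural analysis, and you explicitly acknowledge being unable to derive it, the proposal has a genuine gap at its crucial step.
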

\begin{proof}
    Note that it suffices to show that every $\CIVvI$-$\CVvI$-trail forms a $\YTriQuadQuad$-configuration or has a bounding edge that is crossed at most once.
    The claim then follows as every edge is a bounding edge of at most two trails.

    Consider a \CIVvI-\CVvI-trail. 
    As every edge is crossed at most three times, the trail contains at most one~$\CIVvO$-cell.
    
    If the trail contains no such cell, we are in the situation depicted on the left of \cref{fig:3E}. 
    Note that the bounding edge~$e$ of the trail is crossed only once. 
    
    If the trail contains a $\CIVvO$-cell, we are in the situation represented on the right of \cref{fig:3E}. 
    The trail forms a $\YTriQuadQuad$-configuration.

    Thus, the number of \CIVvI-\CVvI-trails yields a lower bound on the sum of half the number of edges that are crossed only once, and the number of $\YTriQuadQuad$-configurations.
    \begin{figure}[t]
        \centering
        \includegraphics[page=5]{figures/adjacencies_two_images.pdf}
        \caption{A \CIVvI-\CVvI-trail with no $\CIVvO$-cell is depicted on the left, and with one $\CIVvO$-cell on the right.}
        \label{fig:3E}
    \end{figure}
\end{proof}

\begin{lemma}
    If~$\Gamma$ is a $3$-saturated drawing, then
    \begin{align}
    \tag{4.A}
    \label{eq:4A}
    2\mul\cor{\CVvO}{\CVvO} + \cor{\CIVvI}{\CVvO} + \cor{\CIIIvO}{\CVvO} - 4\mul \CVvO \leq \YStar.
    \end{align}
\end{lemma}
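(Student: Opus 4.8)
The plan is to rewrite \eqref{eq:4A} by double-counting trail-endpoints incident to \CVvO-cells, along the lines of the proof of \eqref{eq:2D}. By \cref{obs:one_corridor_per_inner_segment}, each of the five (inner) edge-segments on the boundary of a \CVvO-cell starts exactly one trail, whose other endpoint is --- by the same argument used for \eqref{eq:2D} --- a \CIIIvO-, \CIVvI-, \CVvI-, \CVvO-, or \Clarge-cell. For a \CVvO-cell $c$, let $b(c)$ be the number of its five boundary edge-segments whose trail has its other endpoint at a \CIIIvO-, \CIVvI-, or \CVvO-cell. Summing over all \CVvO-cells, and counting a \CVvO-\CVvO-trail once at each of its two \CVvO-endpoints, we obtain $\sum_{c}b(c) = \cor{\CIIIvO}{\CVvO} + \cor{\CIVvI}{\CVvO} + 2\mul\cor{\CVvO}{\CVvO}$, so the left-hand side of \eqref{eq:4A} equals $\sum_{c}\bigl(b(c)-4\bigr)$. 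Since $b(c)\le 5$, only \CVvO-cells with $b(c)=5$ contribute positively; calling such a cell \emph{bad}, it therefore suffices to show that the number of bad cells is at most \YStar.

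To that end, I would assign to every bad \CVvO-cell $c$ a \YStar-configuration having $c$ among its cells; since $c$ can be recovered from the configuration (for instance as its unique \CVvO-cell), distinct bad cells give distinct \YStar-configurations and the claimed inequality follows. Let $s_1,\dots,s_5$ be the boundary edge-segments of $c$ in cyclic order, carried by edges $e_1,\dots,e_5$, with $s_i$ running between the crossings $e_{i-1}\cap e_i$ and $e_i\cap e_{i+1}$; thus each $e_i$ is already crossed twice along $\partial c$, and the two bounding edges of the trail through $s_i$ are $e_{i-1}$ and $e_{i+1}$. One then analyzes, for each $i$, how the trail through $s_i$ behaves --- whether it passes through \CIVvO-cells and whether $e_{i-1}$ or $e_{i+1}$ carries a third crossing --- using that $c$ is bad to force its far endpoint to be a \CIIIvO-, \CIVvI-, or \CVvO-cell. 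In each resulting local picture, exactly as in the proofs of \eqref{eq:3A}, \eqref{eq:3B}, and \eqref{eq:3D}, the filling property of the $3$-saturated drawing $\Gamma$ supplies an edge between two vertices lying on a common cell boundary, and these edges cut off the five cells that, together with $c$, constitute a \YStar-configuration.

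I expect this last case analysis to be the main obstacle. In particular one must rule out the degenerate possibility that $e_i=e_j$ for some $i\ne j$ (which would force an edge to meet $\partial c$ in two segments and hence, being $3$-plane, leads to a contradiction with the structure of $\partial c$), treat the subcase in which an $e_i$ carries its third crossing on the side away from $c$, invoke the filled property in each local configuration to obtain precisely the neighbouring cells of a \YStar, and verify that the five \emph{points} attached to $c$ are pairwise distinct and arranged as in the configuration. By contrast, the double-counting reduction of the first paragraph and the injectivity of the assignment sending a bad \CVvO-cell to its \YStar-configuration are routine; essentially all of the work lies in showing that a bad \CVvO-cell is locally a \YStar.
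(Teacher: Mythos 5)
Your first paragraph is correct and is essentially the paper's own counting step: your ``bad'' cells are exactly what the paper calls \emph{saturated} $\CVvO$-cells (those all five of whose incident trails end in a \emph{crossing-even} cell, i.e.\ a $\CIIIvO$-, $\CIVvI$- or $\CVvO$-cell), and your bound $\sum_c\bigl(b(c)-4\bigr)\le\#\set{c \given b(c)=5}$ is the paper's inequality $k-4\mul\CVvO\le\ell$. The injectivity of the assignment via the unique $\CVvO$-cell of a $\YStar$ is also as in the paper. The genuine gap is in your second paragraph, which is where all the content of the lemma lies, and the sketch you give would not go through as stated. You propose to analyze each of the five trails at a bad cell and to extract from ``each resulting local picture'' two vertices on a common cell boundary to which the filled property applies. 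But a bad cell may well have incident trails that end in a $\CIIIvO$- or $\CVvO$-cell, or that reach a $\CIVvI$-cell only after passing through a $\CIVvO$-cell; such trails place no vertex near $c$, so there is nothing for the filled property to connect on that side of $c$, and one cannot ``cut off'' cells all the way around $\partial c$. The $\YStar$-configuration does not use all five trails: it consists of $c$, the $\CIVvI$-cells at the far ends of three \emph{consecutive} trails that end directly (with no intermediate $\CIVvO$-cell) in $\CIVvI$-cells, and the two $\CVvII$-cells wedged between these three; only there does the filled property enter.

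The idea you are missing is the argument that three such consecutive trails always exist at a bad cell. Call a trail at $c$ \emph{uncrossed} if its boundary carries no crossing besides those on $\partial c$; at a bad cell an uncrossed trail necessarily ends directly in a $\CIVvI$-cell, since a $\CIIIvO$- or $\CVvO$-endpoint or an intermediate $\CIVvO$-cell would put an extra crossing on a bounding edge. Now suppose the trail through $s_i$, with bounding edges $e_{i-1}$ and $e_{i+1}$, is crossed. Because $c$ is bad, this trail ends in a crossing-even cell, so $e_{i-1}$ and $e_{i+1}$ have the \emph{same} number of crossings inside the trail; hence \emph{both} acquire a third crossing there, exhausting their crossing budget in the $3$-plane drawing. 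Since $e_{i+1}$ is also a bounding edge of the trail through $s_{i+2}$ (and $e_{i-1}$ of the one through $s_{i-2}$), those two trails must be uncrossed. A crossed trail thus forces the two trails at cyclic distance two from it to be uncrossed, so at most two of the five trails are crossed and, if two, they are cyclically adjacent; in every case three consecutive trails are uncrossed, and these three yield the $\YStar$. Without this step your case analysis has no way to guarantee the vertices it needs, so as written the proof is incomplete precisely at the point you yourself identify as carrying ``essentially all of the work.''
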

\begin{proof}
    Let~$T$ be a trail ending in a $\CVvO$-cell and let~$e_1$ and~$e_2$ be the two edges on its boundary.
    If the other end of~$T$ corresponds to a $\CIVvI$-, $\CVvO$- and $\CIIIvO$-cell, the edges~$e_1$ and~$e_2$ have the same number of crossings within the trail.
    We therefore call these cells \emph{crossing-even} and use the same term for trails where one endpoint corresponds to a $\CVvO$-cell, the other to a crossing-even cell. 
    A $\CVvO$-cell where every incident trail ends in a crossing-even cell is \emph{saturated}.
    We denote the number of saturated $\CVvO$-cells by~$\ell$.

    \proofsubparagraph{An upper bound on the number~$\bm{\ell}$ of saturated $\CVvO$-cells.}
    We first show that every saturated $\CVvO$-cell is part of at least one $\YStar$-configuration. 
    As no such configuration contains two $\CVvO$-cells, we then obtain
    \[
    \label{eq:num_saturated_c5v0_at_most_number_ystar}
    \ell \leq \YStar. \tag{$\star$}
    \]

    Consider a saturated $\CVvO$-cell~$c$. 
    We say that a trail incident to~$c$ is \emph{uncrossed} if its boundary contains no crossing that is not incident to~$c$.
    Otherwise, we say that it is \emph{crossed}.
    In fact, all such uncrossed trails end in a $\CIVvI$-cell and contain no $\CIVvO$-cell. 
    Every trail~$T$ incident to~$c$ contains an inner edge-segment~$s(T)$ on the boundary of~$c$ in its interior.
    Two such trails~$T$ and~$T'$ are called \emph{consecutive} if the edge-segments~$s(C)$ and~$s(C')$ share a crossing.
    
    We first show that there are at least three consecutive uncrossed trails incident to the saturated cell~$c$. 
    If none of the incident trails of~$c$ is crossed, the claim clearly holds.
    Otherwise, some trail~$A$ is crossed, see \cref{fig:4_oversaturation} (left).
    Recall that~$A$ is crossing-even.
    Thus, the two edges on its boundary are crossed three times.
    Therefore, the trails~$C$ and $D$ are uncrossed.
    The same argument shows that only one of the two remaining trails~$B$ and~$E$ is crossed.
    We therefore obtain three consecutive uncrossed trails.
    
    As every uncrossed trail ends in a $\CIVvI$-cell and contains no $\CIVvO$-cell, we are in the situation depicted in \cref{fig:4_oversaturation} (right).
    \begin{figure}[tb]
        \centering
        \includegraphics[page=6]{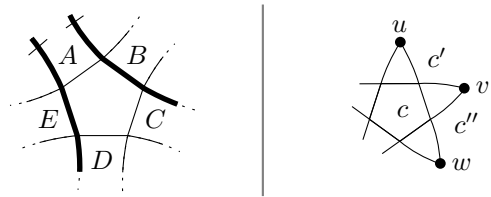}
        \caption{Left: A saturated $\CVvO$-cell with its incident trails is represented. One of the trails is crossed. The corresponding boundary edges are represented with thick lines.
        Right: A saturated $\CVvO$-cell with three consecutive uncrossed trails.}
        \label{fig:4_oversaturation}
    \end{figure}
    Note that the vertices~$u$ and~$v$ lie on the boundary of a cell~$c'$. 
    Thus, $uv \in E(G)$ and~$c'$ is a $\CVvII$-cell as~$\Gamma$ is $3$-saturated.
    Similarly, we see that $vw \in E(G)$ and~$c''$ is a $\CVvII$-cell.
    The cells~$c$, $c'$ and~$c''$ form a $\YStar$-configuration.
    It follows that every saturated $\CVvO$-cell is part of a $\YStar$-configuration.

    \medskip
    
    \proofsubparagraph{A lower bound on the number~$\bm{\ell}$ of saturated $\CVvO$-cells.}
    We define \[k \coloneqq 2\mul\cor{\CVvO}{\CVvO} + \cor{\CIVvI}{\CVvO} + \cor{\CIIIvO}{\CVvO},\]
    i.e. $k$ corresponds to the number of incidences between crossing-even trails and~$\CVvO$-cells.
    It remains to show that 
    \[
    \label{eq:num_saturated_c5v0_lower_bounded_by_crossing_even_ccorridors}
    k  - 4\mul \CVvO \leq \ell. \tag{$\star\star$}
    \]
    The claim then follows by combining~\eqref{eq:num_saturated_c5v0_at_most_number_ystar} and \eqref{eq:num_saturated_c5v0_lower_bounded_by_crossing_even_ccorridors}.
    
    Every $\CVvO$-cell that is not saturated is incident to at most four crossing-even trails while every saturated $\CVvO$-cell is incident to five such trails.
    As there are $k$~incidences between crossing-even trails and $\CVvO$-cells, we obtain $k \leq 4(\CVvO-\ell) + 5\ell = 4\CVvO + \ell$.
    Thus, \eqref{eq:num_saturated_c5v0_lower_bounded_by_crossing_even_ccorridors} holds.
\end{proof}

\begin{lemma}
    If~$\Gamma$ is a $3$-saturated drawing, then
    \begin{align}
    \tag{4.B}
    \label{eq:4B}
    \YTriTriTri \geq \YStar.
    \end{align}
\end{lemma}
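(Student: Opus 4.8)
The plan is to define an injection from the $\YStar$-configurations of $\Gamma$ into its $\YTriTriTri$-configurations; as each side of \eqref{eq:4B} counts the occurrences of the respective configuration type, this yields the claimed inequality.

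First I would record the anatomy of a $\YStar$-configuration $S$, as it is produced in the proof of \eqref{eq:4A}: it consists of a central $\CVvO$-cell $c$, three $\CIVvI$-cells $D_1,D_2,D_3$ that are the far endpoints of three consecutive uncrossed trails incident to $c$, so that $D_i$ and $c$ share an inner edge-segment $s_i$ on $\partial c$ with $s_1,s_2,s_3$ consecutive along $\partial c$; writing $x$ for the common endpoint of $s_1$ and $s_2$ and $y$ for that of $s_2$ and $s_3$ (both are crossings, as $c$ has no vertex on its boundary), the two remaining cells of $S$ are $\CVvII$-cells $c'$ and $c''$, where $c'$ is the fourth cell incident to $x$ (besides $c,D_1,D_2$) and $c''$ the fourth cell incident to $y$. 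Observe that $D_2$ is the unique $\CIVvI$-cell of $S$ that is adjacent to both $\CVvII$-cells of $S$, hence it is distinguished by the isomorphism type of $S$.

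The heart of the argument is to show that the edge $e$ carrying the inner edge-segment $s_2$ of $D_2$ is crossed exactly twice, namely only at $x$ and at $y$. For this I would analyze the drawing locally around $x$. As $\Gamma$ is $3$-saturated, the cells $c,D_1,c',D_2$ are non-degenerate and each has $x$ as a corner, so they are precisely the four cells meeting at the crossing $x$; their cyclic order there must be $c,D_1,c',D_2$, since $c$ shares $s_1$ with $D_1$ and $s_2$ with $D_2$ while $c'$ is the remaining cell. Hence the edge-segment of $e$ at $x$ other than $s_2$ is the one shared by $D_1$ and $c'$, and since this is not $s_1$ it is an outer edge-segment of $D_1$, which therefore terminates at the apex vertex of $D_1$. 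So $e$ has an endpoint immediately past $x$ and cannot be crossed again on that side; the symmetric argument around $y$, with $D_3$ in the role of $D_1$, rules out a crossing on the other side of $s_2$. I expect this to be the main obstacle: pinning down the local cyclic order of cells and edge-segments around $x$ from the combinatorics of $S$, and concluding from it that the continuation of $e$ is forced to end at a vertex.

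Once $e$ is known to have exactly two crossings, the reasoning in the proof of \eqref{eq:3D} applies to the $\CIVvI$-cell $D_2$: it shows that the two cells sharing an outer edge-segment with $D_2$ --- which are exactly $c'$ and $c''$ --- are $\CVvII$-cells that together with $D_2$ form a $\YTriTriTri$-configuration $Y(S)\subseteq S$. It then remains to verify injectivity of $S\mapsto Y(S)$, which is routine bookkeeping: from the embedded subgraph $Y(S)$ one recovers $D_2$ as its $\CIVvI$-cell, then $c$ as the cell across the inner edge-segment of $D_2$, and finally $D_1$ (resp.\ $D_3$) as the cell sharing with $c'$ (resp.\ $c''$) the edge-segment incident to the unique crossing corner of $c'$ (resp.\ $c''$) other than the one shared with $D_2$. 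Thus $S$ is determined by $Y(S)$, the map is injective, and \eqref{eq:4B} follows.
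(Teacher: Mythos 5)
Your proof is correct and follows essentially the same route as the paper: the paper's (two-sentence) argument is precisely that every $\YStar$-configuration contains a $\YTriTriTri$-configuration and that no two $\YStar$-configurations share one, which is what your map $S\mapsto Y(S)$ and its injectivity establish. Your additional local analysis around the crossings $x$ and $y$ is a sound (if not strictly necessary) verification that the middle $\CIVvI$-cell together with the two $\CVvII$-cells of $S$ indeed forms a $\YTriTriTri$-configuration.
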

\begin{proof}
    Each \YStar-configuration contains a \YTriTriTri.
    As no two \YStar-configurations share a \YTriTriTri, the claim follows.
\end{proof}

\begin{lemma}
    For every drawing~$\Gamma$, we have
    \begin{align}
    \tag{9.C}
    \label{eq:9C}
        2\mul\CVvII \geq  \YQuintQuadTri +  \YTriQuad +  \YTriQuadTri +  \YTriTri + 2\YTriTriTri.
    \end{align}
\end{lemma}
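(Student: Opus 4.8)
The plan is to read the right-hand side of \eqref{eq:9C} as a count of incidences between $\CVvII$-cells and configurations of the five listed types, and then to show that each $\CVvII$-cell accounts for at most two such incidences.

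First I would record that, by inspecting the pictograms (equivalently, the structural descriptions used in the proofs of \eqref{eq:3A}--\eqref{eq:3E}), a $\YQuintQuadTri$-, a $\YTriQuad$-, a $\YTriQuadTri$-, and a $\YTriTri$-configuration each contain exactly one $\CVvII$-cell, whereas a $\YTriTriTri$-configuration contains exactly two $\CVvII$-cells (its two ``outer'' cells, with the $\CIVvI$-cell in the middle). Hence, writing $N$ for the number of pairs $(c,C)$ in which $c$ is a $\CVvII$-cell and $C$ a configuration of one of these five types with $c\in C$, we get
\[
N = \YQuintQuadTri + \YTriQuad + \YTriQuadTri + \YTriTri + 2\mul\YTriTriTri,
\]
so it suffices to prove $N \leq 2\mul\CVvII$.

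Next I would fix a $\CVvII$-cell $c$ and bound the number of configurations of the five types containing it. A $\CVvII$-cell has exactly three edge-segments on its boundary: the uncrossed edge $uv$ joining its two vertices $u,v$, and two further edge-segments $a$ and $b$, namely those incident to the unique crossing on $\partial c$, with $a$ emanating from $u$ and $b$ from $v$. The key structural observation, which I would verify type by type from the pictograms, is that wherever $c$ plays the role of a $\CVvII$-cell in one of the five configurations, the rest of the configuration is glued to $c$ along exactly one of $a$ or $b$ --- never along $uv$ (the cell adjacent across $uv$ does not carry both $u$ and $v$) and never along both $a$ and $b$. Thus every such configuration ``uses'' one of the two crossed edge-segments $a,b$ of $c$.

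It then remains to show that, for a fixed crossed edge-segment $s\in\{a,b\}$ of $c$, at most one configuration of the five types contains $c$ and is glued along $s$. This I would establish by a case analysis on the cell $d$ on the other side of $s$: if $d$ is a $\CIVvI$-cell, the configuration must be the unique $\YTriTri$- or $\YTriTriTri$-configuration through $d$ (as in the proof of \eqref{eq:3D}, with $\YTriTri$ versus $\YTriTriTri$ decided by the number of crossings on the inner edge of $d$); if $d$ is a $\CVvI$-cell, the configuration is a $\YQuintQuadTri$ or a $\YTriQuadTri$, determined by the trail leaving $d$ through $s$ and the type of its far endpoint; if $d$ is a $\CVIvII$-cell, the configuration is the $\YTriQuad$ formed by $c$ and $d$; and otherwise no configuration of the five types is glued along $s$. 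In each case the pair $(c,s)$ pins down the whole configuration. Consequently each $\CVvII$-cell lies in at most two configurations of the five types --- at most one per crossed edge-segment --- so $N\leq 2\mul\CVvII$, which is \eqref{eq:9C}. The main obstacle is exactly this last step: one must check, for every type, both that the $\CVvII$-cell is glued along a single crossed edge-segment and that this edge-segment determines the configuration, in particular ruling out that a single pair $(c,s)$ supports both a $\YTriTri$ and a $\YTriTriTri$, or both a $\YQuintQuadTri$ and a $\YTriQuadTri$; this relies only on the local structure encoded in the pictograms and not on $3$-saturation, consistent with the statement holding for every drawing.
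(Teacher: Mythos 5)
Your proposal is correct and is essentially the paper's argument: both bound the right-hand side by double-counting against the two crossed (outer) edge-segments of each $\CVvII$-cell, using that each of the five configuration types attaches to each of its interior $\CVvII$-cells along exactly one such segment and that distinct configurations use distinct (cell, segment) pairs. You merely route the count through (cell, configuration) incidences and then inject into (cell, outer edge-segment) pairs, with a more explicit case analysis for injectivity than the paper gives.
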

\begin{proof}
    We give a lower bound on the number of \CVvII-cells by double-counting the number of outer edge-segments incident to a \CVvII-cell.

    Every \CVvII-cell is incident to two outer edge-segments.

    Recall that an edge-segment is interior to a configuration~$C$ if both its incident cells are interior to~$C$.
    In each of the configurations $\YQuintQuadTri$, $\YTriQuad$, $\YTriQuadTri$, $\YTriTri$ and $\YTriTriTri$, exactly one outer edge-segment~$s_c$ of each interior $\CVvII$-cell~$c$ is interior to the configuration.
    Note in particular that we did not count any such edge-segment twice due to the choice of the configurations we consider.
    Indeed, any two configurations may share a \CVvII-cell~$c$, but the corresponding outer edge-segments~$s_c$ do not coincide.
    As each $\YQuintQuadTri$, $\YTriQuad$, $\YTriQuadTri$ and $\YTriTri$ contains one outer edge-segment that is incident to a \CVvII-cell in its interior, and \YTriTriTri contains two such segments, the lower bound on the number of \CVvII-cells follows.
\end{proof}

\subsection{Lower Bounds on the Number of Edges}

\begin{lemma}
    For every drawing $\Gamma$, we have
    \begin{align}
    \tag{6}
    \label{eq:6}
        4 \abs{E_{\times}} \geq 2\mul\CIVvI + 2\mul\CVvI + 2\mul \CVvII + 2\mul \CVIvII.
    \end{align}
\end{lemma}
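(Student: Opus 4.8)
The plan is a double-counting argument over the \emph{end-segments} of crossed edges, where for $e \in E_{\times}$ the two \emph{end-segments} of $e$ are the two edge-segments of $e$ that contain its endpoints. Each such end-segment has exactly one vertex endpoint and one crossing endpoint, so the end-segments of crossed edges are precisely the outer edge-segments that are incident to a crossing; moreover, distinct crossed edges have disjoint sets of end-segments, since an end-segment determines which endpoint of which edge it contains. Let $N$ be the number of pairs $(c,s)$ such that $s$ is an end-segment of a crossed edge and $s$ appears on the boundary $\partial c$, counted with the multiplicity with which $s$ occurs on $\partial c$. Every edge-segment contributes exactly $2$ in total to such multiplicities over all cells (its two sides), and every crossed edge has exactly two end-segments, so $N = 2\cdot 2\,\abs{E_{\times}} = 4\,\abs{E_{\times}}$.

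For the lower bound on $N$ I would keep only the contributions of the four cell types on the right-hand side of \eqref{eq:6}, reading off their boundary structure from \cref{fig:cell-characterization} (and, for the size-$6$ type $\CVIvII$, the cell whose two vertices are joined along its boundary by an uncrossed edge). A $\CIVvI$-cell has its unique vertex incident to two distinct outer edge-segments, each having a crossing as its other endpoint; a $\CVvI$-cell likewise has two such outer edge-segments at its unique vertex; and a $\CVvII$- or $\CVIvII$-cell has, besides the uncrossed edge joining its two vertices, exactly two further outer edge-segments, each joining a crossing to a vertex. In every case these are two distinct end-segments of crossed edges lying on $\partial c$, so each cell of type $\CIVvI$, $\CVvI$, $\CVvII$, or $\CVIvII$ contributes at least $2$ to $N$. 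Hence $N \ge 2\mul\CIVvI + 2\mul\CVvI + 2\mul\CVvII + 2\mul\CVIvII$, which together with $N = 4\,\abs{E_{\times}}$ gives \eqref{eq:6}.

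The only real care needed is in making both estimates of $N$ exact: that ``end-segment of a crossed edge'' coincides with ``outer edge-segment with a crossing endpoint'', that no end-segment is attributed to two different crossed edges, and --- the sole place where the specific drawing model enters --- that each of the four listed cell types really carries two such segments on its boundary. This last point amounts to checking that in these cells the only outer edge-segment \emph{not} incident to a crossing is the uncrossed edge between the two vertices, which is present exactly in the cases $\CVvII$ and $\CVIvII$; I expect this case check to be the main (though routine) obstacle.
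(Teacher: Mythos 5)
Your proof is correct and takes essentially the same approach as the paper: a double count of the incidences between cells and the outer edge-segments of crossed edges, with $4\abs{E_{\times}}$ on one side and two such segments per cell of each listed type on the other. Your additional care in distinguishing the uncrossed boundary edge of a $\CVvII$- or $\CVIvII$-cell from the two end-segments of crossed edges only makes explicit a point the paper leaves implicit.
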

\begin{proof}
    We double count the number~$\ell$ of incidences between outer edge-segments and cells.

    Every crossed edge has exactly two outer edge-segments. 
    Each of them is incident to two cells.
    Thus, $4 \abs{E_{\times}} = \ell$. 

    Every \CIVvI-cell, every \CVvI-cell, every \CVvII-cell and every \CVIvII-cell is incident to exactly two outer edge-segments.
    We therefore have, $\ell \geq 2\CIVvI + 2\mul\CVvI + 2\mul \CVvII + 2\mul \CVIvII$ which yields the claim.
\end{proof}

\begin{lemma}
    For every drawing~$\Gamma$, we have
    \begin{align}
        2\abs{E_2} + 4 \abs{E_3} \geq 3\mul\CIIIvO &+  \CIVvI + 4\mul\CIVvO + 2\mul \CVvI + 5\mul\CVvO \notag \\
        &+ \cor{\CIVvI}{\Clarge} + \cor{\CVvI}{\Clarge} + \cor{\CIIIvO}{\Clarge} + \cor{\CVvO}{\Clarge}. \tag{7}
    \label{eq:7}
    \end{align}
\end{lemma}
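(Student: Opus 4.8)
The plan is to establish \eqref{eq:7} by double-counting incidences between \emph{inner} edge-segments and cells. For the left-hand side, recall that an edge crossed~$i$ times is split into $i+1$ edge-segments, exactly $\max\{i-1,0\}$ of which are inner (the ``middle'' ones, with both endpoints at crossings). Since every edge of~$\Gamma$ is crossed at most three times, there is exactly one inner edge-segment for each edge of~$E_2$ and there are two for each edge of~$E_3$, hence $\abs{E_2}+2\abs{E_3}$ inner edge-segments in total; as each borders exactly two cells, the number of incidences between inner edge-segments and cells equals $2\abs{E_2}+4\abs{E_3}$. It therefore suffices to exhibit at least $3\mul\CIIIvO+\CIVvI+4\mul\CIVvO+2\mul\CVvI+5\mul\CVvO+\cor{\CIVvI}{\Clarge}+\cor{\CVvI}{\Clarge}+\cor{\CIIIvO}{\Clarge}+\cor{\CVvO}{\Clarge}$ pairwise distinct such incidences.

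First I would count the incidences at the small cells. By the classification of small cells (\cref{fig:cell-characterization}), a \CIIIvO-cell has three, a \CIVvO-cell four, a \CIVvI-cell one, a \CVvI-cell two, and a \CVvO-cell five inner edge-segments on its boundary, while a \CVvII-cell has none (which is why \CVvII does not occur on the right-hand side of \eqref{eq:7}). Summing over all cells of these types produces exactly $3\mul\CIIIvO+\CIVvI+4\mul\CIVvO+2\mul\CVvI+5\mul\CVvO$ pairwise distinct incidences.

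Next I would charge one further incidence to every trail~$T$ of one of the four types \CIVvI-\Clarge, \CVvI-\Clarge, \CIIIvO-\Clarge, and \CVvO-\Clarge. Write~$B$ for the large cell at one end of~$T$, and let~$s_T$ be the inner edge-segment by which~$T$ enters~$B$, i.e.\ the inner segment that~$B$ shares with the preceding cell of~$T$ (which is a \CIVvO-cell, or the small endpoint of~$T$ if~$T$ has length two). Since~$B$ is large, the incidence $(s_T,B)$ is not among those counted in the previous paragraph; and since the other cell incident to~$s_T$ is never large, $(s_T,B)$ is a single well-defined incidence. Finally,~$s_T$ lies interior to~$T$, so by \cref{obs:one_corridor_per_inner_segment} it lies interior to no other trail; thus $T\mapsto(s_T,B)$ is injective, and these $\cor{\CIVvI}{\Clarge}+\cor{\CVvI}{\Clarge}+\cor{\CIIIvO}{\Clarge}+\cor{\CVvO}{\Clarge}$ incidences are pairwise distinct and also distinct from all incidences charged to small cells. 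Summing the three groups yields \eqref{eq:7}.

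The step I expect to be the main obstacle is this last one, specifically verifying that $(s_T,B)$ is genuinely a new incidence and that $T\mapsto(s_T,B)$ is injective. The crucial point is structural: the cell on the other side of~$s_T$ is always a \CIVvO-cell or one of \CIIIvO, \CIVvI, \CVvI, \CVvO, and in particular never large. This is what makes $(s_T,B)$ well defined, and it also ensures that, even though the twin incidence of~$s_T$ with that small neighbour may already have been counted in the second paragraph, its incidence with the large cell~$B$ has not. Trails both of whose endpoints are large, and the excluded endpoint types \CIVvO and \CVvII, simply contribute nothing, which is harmless for a lower bound; the remainder is careful bookkeeping, with \cref{obs:one_corridor_per_inner_segment} doing the essential work.
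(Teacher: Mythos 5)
Your proof is correct and follows essentially the same route as the paper: double-count incidences between inner edge-segments and cells, read off the exact inner-segment counts of the small cell types, and charge one additional incidence at the large endpoint to each of the four listed trail types, with \cref{obs:one_corridor_per_inner_segment} guaranteeing injectivity. Your write-up is merely more explicit than the paper's about why the incidence at the large cell is new and why the assignment is injective.
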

\begin{proof}
    We double count the number~$\ell$ of incidences between inner edge-segments and cells.

    An edge that is crossed $k$ times is split into $k+1$ edge-segments, $k-1$ of which are inner edge-segments. 
    Each inner edge-segment is incident to two cells.
    We therefore have $2 \abs{E_2} + 4 \abs{E_3} \geq \ell$.

    On the other hand, every \CIIIvO-cell, every \CIVvI-cell, every \mul\CIVvO-cell, every \CVvI-cell and every \CVvO-cell is incident to $3$, $1$, $4$, $2$ and $5$ inner edge-segments respectively.
    In order to obtain a lower bound for the number of inner edge-segment incidences with large cells, it suffices to count the number of trails ending in large cells.
    Indeed, each such trail enters a large cell through an inner edge-segment.
    This yields the right side of the inequality above.
\end{proof}

\begin{lemma}
    For every drawing~$\Gamma$, we have
    \begin{align}
    \tag{8.C}
    \label{eq:8C}
        \abs{E_2} \geq \frac{1}{2} \mul \YTriQuadQuad + \YStar
    \end{align}
\end{lemma}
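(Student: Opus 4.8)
The plan is to double-count the twice-crossed edges~$E_2$ by associating to each $\YTriQuadQuad$-configuration and each $\YStar$-configuration some distinguished edge(s) of~$E_2$, in a way that uses each edge of~$E_2$ only a bounded number of times. Recall from the proof of~\eqref{eq:3E} that a $\YTriQuadQuad$-configuration arises from a $\CIVvI$-$\CVvI$-trail that contains a $\CIVvO$-cell; in that situation the inner edge-segment of the central $\CIVvO$-cell belongs to an edge crossed exactly twice (it is one of the two bounding edges, and the $\CIVvO$-cell forces it to have a crossing on each side plus the one with the other bounding edge — so exactly three? here one must check carefully: the relevant edge in the $\YTriQuadQuad$ picture is crossed twice). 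I would pick, for each $\YTriQuadQuad$, the (or a) such edge in~$E_2$; since a single edge of~$E_2$ can be a bounding edge of at most two trails, it is charged at most twice this way, giving the coefficient $\tfrac12$ in front of $\YTriQuadQuad$.

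For the $\YStar$ term, I would examine the structure of a $\YStar$-configuration as produced in the proof of~\eqref{eq:4A}: it contains a $\CVvO$-cell~$c$ together with three consecutive uncrossed trails ending in $\CIVvI$-cells, and two $\CVvII$-cells $c',c''$ with $uv, vw \in E(G)$. The edges $uv$ and $vw$ are each crossed twice (they form the $\CVvII$-cells and are bounding edges of the uncrossed trails, which, being uncrossed through $c$, means exactly two crossings each). I would assign to each $\YStar$-configuration one such edge of~$E_2$ and argue that distinct $\YStar$-configurations get distinct edges — here I would use that no two $\YStar$-configurations share a $\CVvO$-cell (already used in the proof of~\eqref{eq:4A}), and that the particular edge chosen is determined by and determines the $\CVvO$-cell of the configuration. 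Finally I would check that the edges charged to $\YStar$-configurations are disjoint from those charged (with weight $\tfrac12$) to $\YTriQuadQuad$-configurations, or more cheaply, that the total charge on any edge of~$E_2$ from both sources together is at most~$1$; summing over all of~$E_2$ then yields~\eqref{eq:8C}.

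The main obstacle I anticipate is the disjointness/bounded-multiplicity bookkeeping: making sure that the edge selected for a $\YStar$-configuration is not simultaneously a bounding edge of two $\CIVvI$-$\CVvI$-trails with $\CIVvO$-cells (which would clash with the $\tfrac12 \YTriQuadQuad$ term), and verifying the exact crossing counts in the reference figures (\cref{fig:3E}, \cref{fig:4_oversaturation}) so that the chosen edges genuinely lie in~$E_2$ rather than in~$E_1$ or~$E_3$. If a direct disjointness argument fails, the fallback is a weighting argument: give each edge of~$E_2$ a budget of~$1$, have $\YTriQuadQuad$-configurations draw $\tfrac12$ and $\YStar$-configurations draw~$1$ from budgets of their distinguished edges, and check no edge is overdrawn; this reduces the problem to a purely local case analysis around a single edge of~$E_2$ and the at most two trails it bounds.
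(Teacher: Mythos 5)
Your overall strategy---charging each $\YTriQuadQuad$- and each $\YStar$-configuration to a distinguished twice-crossed edge and bounding the multiplicity with which any edge of~$E_2$ is charged---is exactly the strategy of the paper's (very short) proof. Your handling of the $\YTriQuadQuad$ term is essentially right, modulo pinning down which edge to take: of the two bounding edges of the $\CIVvI$-$\CVvI$-trail in \cref{fig:3E} (right), the one whose last segment is the outer segment of the $\CVvI$-cell adjacent to the shared inner segment ends at the vertex of the $\CVvI$-cell after exactly two crossings, while the other bounding edge is crossed three times; the factor $\tfrac12$ then comes from the fact that an edge is a bounding edge of at most two trails, as in the proof of \eqref{eq:3E}.

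The genuine problem is your choice of distinguished edge for the $\YStar$ term. The edges $uv$ and $vw$ from the proof of \eqref{eq:4A} are precisely the edges supplied by the filled/$3$-saturated property: they run along the boundaries of the cells $c'$ and $c''$ \emph{without any crossings}, which is exactly what makes $c'$ and $c''$ cells of type $\CVvII$ (two vertices, three edge-segments). Hence $uv,vw\in E_0$, not $E_2$, and charging a $\YStar$-configuration to them says nothing about $\abs{E_2}$. The edge you should charge is instead the one whose inner segment is shared between the $\CVvO$-cell and the \emph{middle} of the three $\CIVvI$-cells in \cref{fig:4_oversaturation} (right); it turns out to be the edge $uw$. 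Because the two neighbouring trails are uncrossed, every crossing of this edge is incident to the $\CVvO$-cell, and $3$-planarity forces there to be exactly two such crossings, so this edge does lie in $E_2$. With this choice the bookkeeping you deferred becomes immediate: the inner segment of this edge borders a $\CVvO$-cell and a $\CIVvI$-cell, whereas the inner segment of the edge charged by a $\YTriQuadQuad$ borders a $\CIVvO$-cell, so no edge receives charge from both kinds of configurations; and since that inner segment determines the $\CVvO$-cell, and no two $\YStar$-configurations share a $\CVvO$-cell, distinct $\YStar$-configurations charge distinct edges. That is exactly how the paper's proof concludes.
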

\begin{proof}
    Every \YTriQuadQuad-configuration contains an edge that is crossed twice. 
    This edge may be contained in at most two such configurations. 
    Similarly, every \YStar-configuration contains an edge that is crossed twice. 
    Note that this edge cannot be contained in any other \YStar- or \YTriQuadQuad-configuration.
    
    Thus, the sum of $\frac{1}{2}\YTriQuadQuad$ and $\YStar$ provides a lower bound on the number~$\abs{E_2}$ of edges that are crossed exactly twice.
\end{proof}

Double-counting the number of incidences of uncrossed edges with cells yields the following.
\begin{lemma}
    For every drawing~$\Gamma$, we have
    \begin{align}
    \tag{9.B}
    \label{eq:9B}
        2\abs{E_0} \geq \CVvII + \CVIvII.
    \end{align}
\end{lemma}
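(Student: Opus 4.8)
The plan is to double-count incidences between uncrossed edges and cells. For the left-hand side, observe that every uncrossed edge $e\in E_0$ is a single edge-segment, and this edge-segment lies on the boundary of exactly two cells, counted with multiplicity (possibly the same cell on both sides, if $e$ is a bridge of the planarization). Hence the number of incidences between uncrossed edges and cells is exactly $2\abs{E_0}$.

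For the right-hand side, I would show that every \CVvII-cell and every \CVIvII-cell carries at least one uncrossed edge on its boundary, and therefore accounts for at least one such incidence. A \CVvII-cell $c$ is non-degenerate with $\norm{c}=5$ and exactly two vertex incidences, so $\partial c$ is a cyclic sequence of three edge-segments interleaved with two distinct vertices $u,v$ and a single crossing; since there is only one crossing, $u$ and $v$ are consecutive along $\partial c$, and the edge-segment joining them has two vertex endpoints, hence is an uncrossed edge on $\partial c$. A \CVIvII-cell $c$ is non-degenerate with $\norm{c}=6$, exactly two vertex incidences, and (as used in the proof of~\eqref{eq:5A}) exactly one inner edge-segment; the two crossings bounding that inner segment are consecutive on $\partial c$, so along the cyclic boundary sequence of length four the two vertices $u,v$ occupy the remaining two positions and are consecutive, and again the edge-segment between them is an uncrossed edge on $\partial c$. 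Since distinct cells contribute distinct incidences, the number of incidences between uncrossed edges and cells is at least $\CVvII+\CVIvII$; combined with the exact count $2\abs{E_0}$ from the other side, this proves the inequality.

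There is no real obstacle here; the only step that needs care is the structural claim that the two vertices on the boundary of a \CVvII- or \CVIvII-cell are consecutive, so that the boundary arc between them is a single uncrossed edge rather than a path through one or more crossings. For \CVvII-cells this is forced by the size and vertex count alone, while for \CVIvII-cells it additionally uses that such a cell has exactly one inner edge-segment (equivalently, exactly one pair of consecutive crossings on its boundary), a fact already established for~\eqref{eq:5A}.
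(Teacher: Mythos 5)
Your proof is correct and is essentially the paper's own argument: the paper proves \eqref{eq:9B} by exactly this double count of incidences between uncrossed edges and cells, merely stating it in one sentence. Your additional verification that the two vertices on the boundary of a \CVvII- or \CVIvII-cell are consecutive (so the segment between them is a whole uncrossed edge) correctly fills in the details the paper leaves implicit.
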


\subsection{Relating the Number of Cells, Crossings and Edges}

\begin{corollary}
    If $\Gamma$ is a $3$-saturated drawing on at least three vertices, then
    \begin{align}
    \tag{5.B}
    \label{eq:5B}
    \abs{X} \leq 5\abs{V} + 2\mul\CIIIvO + \CIVvO + \CIVvI -\frac{1}{6}\sum_{a \geq 6} a \abs{\calC_a} - \abs{E}.
    \end{align}
\end{corollary}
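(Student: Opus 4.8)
The plan is to substitute $t=5$ into the Density Formula (\cref{thm:density_formula}) and then estimate the resulting cell-size sum by splitting it according to cell size, using the classification of small cells in a $3$-saturated drawing.

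Concretely, for $t=5$ we have $\tfrac{t-1}{4}=1$, so the Density Formula becomes $\abs{E} = 5(\abs{V}-2) - \sum_{c\in\calC}(\norm{c}-5) - \abs{X}$, which rearranges to
\[
\abs{X} = 5\abs{V} - 10 - \sum_{c\in\calC}\bigl(\norm{c}-5\bigr) - \abs{E}.
\]
Next I would evaluate $\sum_{c\in\calC}(\norm{c}-5)$ by grouping cells by size. Recall that in a $3$-saturated drawing on at least three vertices every cell of size at most five is non-degenerate and of one of the types $\CIIIvO$, $\CIVvO$, $\CIVvI$, $\CVvO$, $\CVvI$, $\CVvII$ (cf.\ \cref{fig:cell-characterization}); in particular there are no cells of size $1$ or $2$, the cells of size $3$ are exactly the $\CIIIvO$-cells, and the cells of size $4$ are exactly the $\CIVvO$- and $\CIVvI$-cells, while cells of size $5$ contribute $0$ to the sum. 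Hence
\[
-\sum_{c\in\calC}(\norm{c}-5) = 2\,\CIIIvO + \CIVvO + \CIVvI - \sum_{a\geq 6}(a-5)\,\abs{\calC_a}.
\]

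To finish, I would bound the large-cell sum: for every integer $a\geq 6$ one has $a-5\geq \tfrac{a}{6}$ (equivalently $5a\geq 30$), so $\sum_{a\geq 6}(a-5)\abs{\calC_a}\geq \tfrac16\sum_{a\geq 6}a\abs{\calC_a}$. Substituting this into the previous displays and discarding the harmless term $-10\leq 0$ yields exactly \eqref{eq:5B}. There is no real obstacle here: the only point requiring care is that the six listed types really do exhaust the small (and in particular the size-$\leq 2$) cells of a $3$-saturated drawing --- which is precisely the classification recalled around \cref{lem:augmentation_to_3-saturated-drawing} --- after which the statement follows by a one-line computation.
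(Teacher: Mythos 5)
Your proof is correct and follows essentially the same route as the paper: setting $t=5$ in the Density Formula, reading off the contributions of the small cell types $\CIIIvO$, $\CIVvO$, $\CIVvI$ (with size-$5$ cells contributing zero), bounding $a-5\geq a/6$ for $a\geq 6$, and discarding the $-10$. No gaps.
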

\begin{proof}
    Recall that every $3$-saturated drawing is in particular connected.
    As the graph contains at least three vertices, every cell has size at least~$3$. 
    Considering $t=5$ in the density formula (\cref{thm:density_formula}) yields
    \begin{equation}
        \label{eq:density_first_inequality}
        \abs{E} \leq 5\abs{V} - \sum_{c \in \calC}\left(\norm{c} - 5 \right) - \abs{X} \tag{$\star$}.
    \end{equation}
    Thus, only cells of size at most~$4$ have a positive contribution to the right side.
    Recall that there is only one cell-type of size~$3$, namely \CIIIvO, and only two types of size~$4$: \CIVvO \, and \CIVvI. 
    Cells of size~$5$ have no contribution. 
    For $a \geq 6$, we have $a-5 \geq \frac{1}{6}a$. 
    We therefore obtain
    \[
    \sum_{c \in C, \norm{c} \geq 6}\left(\norm{c} - 5\right) = \sum_{a \geq 6} \abs{\calC_a}(a-5) \geq \frac{1}{6} \sum_{a \geq 6} a \abs{\calC_a}.
    \]
    Together with \eqref{eq:density_first_inequality}, we get
    \[
        \abs{E} \leq 5\abs{V} + 2\mul\CIIIvO + \CIVvO + \CIVvI - \frac{1}{6} \sum_{a \geq 6} a \abs{\calC_a} - \abs{X}. \qedhere
    \]    
\end{proof}


    \begin{figure}[pt]
        \small
\setlength{\abovedisplayskip}{0pt}
\setlength{\belowdisplayskip}{0pt}
    \begin{align*}
        &&\text{(In)equality} &&& \text{$\abs{E}$} &&\text{$\abs{X}$}
        \\
        \text{\eqref{eq:2A}} &&\cor{\CIVvI}{\CVvI} + \cor{\CIVvI}{\CVvO} + \cor{\CIVvI}{\Clarge} - \CIVvI &= 0 
        &&\coeff{\frac{-5}{16}}
        &&\coeff{\frac{-7}{16}}
        \\
        \text{\eqref{eq:2B}} &&\cor{\CIVvI}{\CVvI} + 2\cor{\CVvI}{\CVvI} + \cor{\CVvI}{\CIIIvO} + \cor{\CVvI}{\CVvO} + \cor{\CVvI}{\Clarge} - 2\mul \CVvI &= 0
        &&\coeff{\frac{5}{16}}
        &&\coeff{\frac{5}{16}}
        \\
        \text{\eqref{eq:2C}} &&\cor{\CIIIvO}{\CVvI} + \cor{\CIIIvO}{\CVvO} + \cor{\CIIIvO}{\Clarge} - 3\mul\CIIIvO &= 0
        &&\coeff{\frac{-11}{24}}
        &&\coeff{\frac{-11}{24}}
        \\
        \text{\eqref{eq:2D}} && \cor{\CIIIvO}{\CVvO} + \cor{\CVvO}{\CIVvI} + \cor{\CVvO}{\CVvI} + 2\cor{\CVvO}{\CVvO} + \cor{\CVvO}{\Clarge} - 5\mul\CVvO &= 0
        &&\coeff{\frac{1}{8}}
        &&\coeff{\frac{-3}{8}}
        \\
        \text{\eqref{eq:3A}} &&\cor{\CIIIvO} {\CVvO} - \YTriQuad &\leq 0
        &&\coeff{\frac{7}{48}}
        &&\coeff{\frac{1}{48}}
        \\
        \text{\eqref{eq:3B}} &&\cor{\CVvO}{\CVvI} - \YQuintQuadTri &\leq 0
        &&\coeff{0}
        &&\coeff{\frac{1}{16}}
        \\
        \text{\eqref{eq:3C}} &&\cor{\CIIIvO}{\CVvI} - \YTriQuadTri &\leq 0
        &&\coeff{\frac{3}{16}}
        &&\coeff{\frac{7}{48}}
        \\
        \text{\eqref{eq:3D}}
        &&\CIVvI - \YTriTri - \YTriTriTri &\leq 0
        &&\coeff{\frac{3}{16}}
        &&\coeff{\frac{5}{16}}
        \\
        \text{\eqref{eq:3E}}
        &&2\cor{\CIVvI}{\CVvI} - \abs{E_1} - 2\YTriQuadQuad &\leq 0 
        &&\coeff{0}
        &&\coeff{\frac{1}{16}}
        \\
        \text{\eqref{eq:4A}}
        &&2\mul\cor{\CVvO}{\CVvO} + \cor{\CIVvI}{\CVvO} + \cor{\CIIIvO}{\CVvO} - 4\mul \CVvO - \YStar &\leq 0
        &&\coeff{\frac{3}{16}}
        &&\coeff{\frac{13}{16}}
        \\
        \text{\eqref{eq:4B}}
        &&\YStar - \YTriTriTri &\leq 0
        &&\coeff{\frac{3}{16}}
        &&\coeff{\frac{5}{16}}
        \\
        \text{\eqref{eq:5A}}
        &&\cor{\CIVvI}{\Clarge} + \cor{\CVvI}{\Clarge} + \cor{\CIIIvO}{\Clarge} + \cor{\CVvO}{\Clarge} + 5\mul\CVIvII - \sum_{a \geq 6}a \abs{\calC_a} &\leq 0
        &&\coeff{\frac{11}{60}}
        &&\coeff{\frac{11}{60}}
        \\
        \text{\eqref{eq:5B}}
        && \sum_{a \geq 6} a \abs{\calC_a} + 6\abs{E} + 6\abs{X} - 12\mul\CIIIvO - 6\CIVvO - 6\CIVvI  &\leq 30(\abs{V}-2)
        &&\coeff{\frac{11}{60}}
        &&\coeff{\frac{11}{60}}
        \\
        \text{\eqref{eq:6}}
        &&2\mul\CIVvI + 2\mul\CVvI + 2\mul \CVvII + 2\mul \CVIvII - 4 \abs{E_{\times}} &\leq 0
        &&\coeff{\frac{13}{80}}
        &&\coeff{\frac{3}{80}}
        \\
        \text{\eqref{eq:7}}
        &&\cor{\CIVvI}{\Clarge} + \cor{\CVvI}{\Clarge} + \cor{\CIIIvO}{\Clarge} + \cor{\CVvO}{\Clarge} \phantom{- 4 \abs{E_3} }\\
        \phantom{\text{\eqref{eq:7}}} &&
        \phantom{\cor{\CIVvI}{\Clarge}} + 3\mul\CIIIvO +  \CIVvI + 4\mul\CIVvO + 2\mul \CVvI + 5\mul\CVvO -2\abs{E_2} - 4 \abs{E_3}  &\leq 0
        &&\coeff{\frac{11}{40}}
        &&\coeff{\frac{11}{40}}
        \\
        \text{\eqref{eq:8A}} && \abs{E_1} + \abs{E_2} + \abs{E_3} - \abs{E_{\times}} &= 0
        &&\coeff{\frac{-11}{20}}
        &&\coeff{\frac{19}{20}}
        \\
        \text{\eqref{eq:8B}} &&\abs{E_1} + 2 \abs{E_2} + 3 \abs{E_3} - 2\abs{X} &= 0
        &&\coeff{\frac{11}{20}}
        &&\coeff{\frac{1}{20}}
        \\
        \text{\eqref{eq:8C}}
        && \YTriQuadQuad + 2\YStar - 2\abs{E_2} &\leq 0
        &&\coeff{0}
        &&\coeff{\frac{1}{4}}
        \\
        \text{\eqref{eq:9A}} &&\abs{E_{\times}} + \abs{E_0} - \abs{E} &= 0
        &&\coeff{\frac{1}{10}}
        &&\coeff{\frac{11}{10}}
        \\
        \text{\eqref{eq:9B}}
        &&\CVvII + \CVIvII - 2\abs{E_0} &\leq 0
        &&\coeff{\frac{1}{20}}
        &&\coeff{\frac{11}{20}}
        \\
        \text{\eqref{eq:9C}}
        && \YQuintQuadTri + \YTriQuad +  \YTriQuadTri + \YTriTri + 2\YTriTriTri - 2\mul\CVvII &\leq 0
        &&\coeff{\frac{3}{16}}
        &&\coeff{\frac{5}{16}}
    \end{align*}
    \caption{Certificates for the upper bound on the number of edges and crossings in $3$-saturated drawings in terms of the number of vertices. Each row corresponds to one inequality. 
In order to obtain the upper bound on the number of edges, we multiply each inequality with the third entry in the corresponding row and sum up all the inequalities.
To obtain the upper bound on the number of crossings we proceed likewise using the fourth entry of each row as a coefficient.}
    \label{fig:certificates}
\end{figure}

\end{document}